\theoremstyle{plain}
\numberwithin{equation}{section} \numberwithin{figure}{section}
\newtheorem{theorem}{Theorem}[section]
\newtheorem{lemma}[theorem]{Lemma}
\theoremstyle{definition}
\newtheorem{remark}[theorem]{Remark}
\numberwithin{equation}{section}
\begin{document}


\title{A mixed local-nonlocal H\'enon problem in $\mathbb{R}^N$}

\author{Pablo Ochoa}

\address{P. Ochoa. \newline Universidad Nacional de Cuyo, Fac. de Ingenier\'ia. CONICET. Universidad J. A. Maza\\Parque Gral. San Mart\'in 5500\\
Mendoza, Argentina.}
\email{pablo.ochoa@ingenieria.uncuyo.edu.ar}

\author{Ariel Salort}
\address{A. Salort \newline Departamento de Matem\'aticas y Ciencia de Datos, Universidad San Pablo-CEU, CEU Universities, Urbanizaci\'on Montepr\'incipe, 28660 Boadilla del Monte, Madrid, Spain. }
\email{\tt ariel.salort@ceu.es}

\parskip 3pt
\subjclass[2020]{35A15, 35A01, 35B06, 35R11, 35D30}
\keywords{H\'enon's equation, Symmetry of solutions, p-Laplacian, fractional p-Laplacian,  Variational Method }

\begin{abstract}
In this article, we study a H\'enon-type equation in $\mathbb{R}^N$ driven by a nonlinear operator given by the combination of a local and a nonlocal term. This equation was originally proposed to model spherically symmetric stellar clusters. Here, we prove that, under a suitable relation among the parameters,  there exists a threshold separating  the existence and non-existence of solutions. Moreover, we establish  regularity properties of the solutions.
\end{abstract}
\maketitle

\section{Introduction}

The classical H\'enon problem, introduced in \cite{Henon}, consists of the following problem
\begin{equation} \label{intro.henon}
-\Delta u = |x|^\alpha u^{q-1} \text{ in } B\subset \mathbb{R}^N, \qquad u>0,
\end{equation}
with Dirichlet boundary conditions, where $B$ denotes the unit ball in $\mathbb{R}^N$.  The motivation in H\'enon's original paper \cite{Henon} was a model of stationary solutions of the nonlinear Poisson equation describing cluster density under gravitational interaction with inhomogeneous distribution. In this direction, the seminal work of Ni \cite{Ni} established that the weight $|x|^\alpha$  allows for a wider admissible range of exponents $2<q<2^*_\alpha:=\frac{2(N+\alpha)}{N-2}$ for which solutions exist. This problem started a huge research on elliptic equations with nonlinear right-hand sides. The literature on the subject is now vast, and providing an exhaustive list of contributions lies beyond the scope of this work. In what follows, we restrict ourselves to highlighting a selection of results that extend the H\'enon equation to nonlocal operators or in unbounded domains.

A generalization of \eqref{intro.henon}, involving a mixed local–nonlocal operator, was introduced in \cite{SV}, where existence (and nonexistence) results for positive solutions of
$$
\gamma (-\Delta_p) u+ (1-\gamma)(-\Delta_p)^s u = |x|^\alpha u^{q-1} \text{ in } B\subset \mathbb{R}^N, \qquad u>0,
$$
under exterior Dirichlet boundary conditions were established for an appropriate range of para\-meters $p<N$ and $p<q<p^*_{s,\alpha}:=\frac{p(N+\alpha)}{N-sp}$. Here, for $p>1$ and for a function $u$ smooth enough, $\Delta_p u:= \text{div}(|\nabla u|^{p-2}\nabla u)$ denotes the usual $p-$Laplacian of $u$, and given $s\in(0,1)$, the fractional $p-$Laplacian of $u$ is defined, up to a normalization constant, as 
$$
(-\Delta_p)^s u(x):=\text{p.v.}\int_{\mathbb{R}^N} \frac{|u(x)-u(y)|^{p-2}(u(x)-u(y))}{|x-y|^{N+sp}}\,dy.
$$

Further generalizations of \eqref{intro.henon} to the fractional Laplacian have been obtained in the unbounded setting, where the ball is replaced by the whole space $\mathbb{R}^N$. In \cite{BQ} it was shown that, in contrast to the case of a bounded domain, the equation
$$
(-\Delta)^s u = |x|^\alpha u^{q-1} \text{ in } \mathbb{R}^N, \qquad u>0, 
$$
admits no positive solutions in the range $~2<q<2^*_{s,\alpha}:=\frac{2(N+\alpha)}{N-2s}$, where $\alpha>-2s$, due to the loss of mass at infinity. Similar nonexistence results for the spectral fractional Laplacian were obtained in \cite{SW}.

In order to obtain existence of solutions in   $\mathbb{R}^N$, a confining potential can be added to prevent the loss of mass at infinity. In the linear local case, \cite{Sin} studied the H\'enon-type equation
\begin{equation} \label{intro.eq.1}
-\Delta u + |x|^\beta u^{p-1} = |x|^\alpha u^{q-1} \text{ in } \mathbb{R}^N, \qquad u>0,
\end{equation}
and proved the existence of a positive radial solution for $N\geq 3$ and $p>1$, under the assumptions $\max\{2,p\}<q< p^*_\alpha:=\frac{p(N+\alpha)}{N-2}$ and $\alpha-\beta<\frac{\beta+2(N-1)}{p+2}(q-p)$.

A nonlocal version of \eqref{intro.eq.1} was studied in \cite{Ma} for the fractional Laplacian with $\beta=2$, namely, 
\begin{equation} \label{intro.eq.2}
(-\Delta)^s u + |x|^2 u = |x|^\alpha u^{q-1} \text{ in } \mathbb{R}^N, \qquad u>0.
\end{equation}
In that work, it was shown that for $\frac12<s<\frac{N}{2}$ and $q>1$, under the condition $-N+(\frac{N}{2}-s)q<\alpha<2+(q-2)(\frac{N}{2}-s)$, there exists a positive radially symmetric solution to \eqref{intro.eq.1} that decays to zero at infinity. We note, however, that the technique used in \cite{Ma} requires
 $\beta=2$, and moreover, the resulting range for $\alpha$ does not coincide with the corresponding formula in the local case.
 
 A similar result was proved in \cite[Corollary 5.5]{PdN}, where existence of a radial weak solution of \eqref{intro.eq.2} is guaranteed when $\frac{N}{2}<s$,  $q<\frac{2(N+\alpha)}{N-2s}$ and $\alpha+ \frac{q(1-2s)-2}{2s}<\frac{s}{2}(q-2)(N-1)$. Moreover, the author in \cite{PdN} gives a condition for the existence of solution when the right hand side $|x|^2 u$ is replaced  by $|x|^\beta u^{p-1}$, namely, $\beta(q-2-2sq) + \alpha (2sp -p+2)<2s(p-q)(N-1)$ for $\max\{2,p\}<q<\frac{2(N+\alpha)}{N-2s}$.

\medskip

The goal of this article is to extend existence results for H\'enon-type equations in $\mathbb{R}^N$ to operators exhibiting both local and nonlocal behavior, namely,
\begin{equation} \label{eq}
\begin{cases}
\mathcal{L}_{s,p,\gamma}\, u + |x|^\beta |u|^{p-2}u=|x|^\alpha |u|^{q-2}u \quad \text{in }\mathbb{R}^N, \\
u >0,
\end{cases}
\end{equation}
where $u$ belongs to a suitable functional space of radial functions $\mathcal{W}^{s,p}_\beta(\mathbb{R}^N)$ defined in Section \ref{sec.1} (see \eqref{space}). Given $\gamma\in [0, 1]$, $s\in (0, 1]$ (depending on $\gamma$) and $p>1$, the operator $\mathcal{L}_{s,p,\gamma}$ is given by 
\begin{equation*}
\mathcal{L}_{s,p,\gamma}\, u := \begin{cases}
(-\Delta_p) u, \quad & \text{if}\quad \gamma=1,\\
\gamma (-\Delta_p) u+ (1-\gamma)(-\Delta_p)^s u, \quad & \text{if}\quad \gamma\in (0,1), \, s\in (0,1),\\(-\Delta_p)^s u, \quad & \text{if}\quad \gamma=0,  \, s\in (0,1).
\end{cases}
\end{equation*}

Hence, this operator is a mixed local–nonlocal operator, reducing to the classical 
$p-$Laplacian when  $\gamma=1$, and to the fractional $p-$Laplacian when $\gamma=0$. We will usually take $s\in (0, 1]$, identifying the case $\gamma=1$ with $s=1$. In the linear case (that is, $p=2$) the operator $\mathcal{L}_{s,p,\gamma}$ admits a probabilistic interpretation: the local component corresponds to the continuous part of the dynamics, while the nonlocal component models jump behavior.

The existence and qualitative properties of solutions for mixed local–nonlocal operators with nonlinear terms is an active area of research. Related results on bounded domains can be found, for instance, in \cite{BDVV, DSFV,  GL, ZHT, MMV,  SVWZ}.

Our primary goal is to study the existence of weak solutions to \eqref{eq}, defined variationally via the corresponding energy functional.
\begin{equation}
\mathcal{J}(u)=\frac{\gamma}{p}\|\nabla u\|_p^p + \frac{1-\gamma}{p}  [u]_{s,p}^p + \frac{1}{p}\|u\|_{p,\beta}^p- \frac{1}{q}\|u\|_{q, \alpha}^q
\end{equation}
defined on a suitable space  $\mathcal{W}^{s,p}_\beta(\mathbb{R}^N)$, where $\|u\|_{\beta,p}^p$ denotes the integral $\int_{\mathbb{R}^N} |x|^\beta |u|^p\,dx$, and $[u]_{s,p}$ is the so-called Gagliardo seminorm (see Section \ref{sec.1} for precise definitions). Indeed, $\mathcal{J}$ is a $C^1$ functional whose Fr\'echet derivative is given by
\begin{align*}
\langle \mathcal{J}'(u),v \rangle &=\gamma \int_{\mathbb{R}^N} \Phi_p(\nabla u)\cdot \nabla v\,dx + (1-\gamma)  \int_{\mathbb{R}^N}\int_{\mathbb{R}^N} \frac{\Phi_p(u(x)-u(y))(v(x)-v(y))}{|x-y|^{N+sp}}\,dxdy\\ &\quad +\int_{\mathbb{R}^N} |x|^\beta \Phi_p(u)v\,dx - \int_{\mathbb{R}^N} |x|^\alpha \Phi_q(u)v\,dx \qquad \forall u,v \in \mathcal{W}^{s,p}_\beta(\mathbb{R}^N), 
\end{align*}
where $\Phi_p(t):=|t|^{p-2}t$. Therefore, any critical point of $\mathcal{J}$ is a weak solution of \eqref{eq}.

Our first result establishes a range of parameters for which a weak solution of \eqref{eq} exists.   For further reference, we let for $\gamma\in [0, 1]$,
\begin{equation*}
p^{*}_{\gamma, s, \alpha}:= \begin{cases}
\frac{p(N+\alpha)}{N-sp}, \quad \text{if }\gamma = 0,\\
\frac{p(N+\alpha)}{N-p}, \quad \text{if }\gamma \in (0, 1].
\end{cases}
\end{equation*}

 Precisely, we have:

\begin{theorem} \label{teo.1}
Let $s\in (0,1]$ such that $1/p<s<N/p$,  $\alpha>-sp $ and $p<q<p^{*}_{\gamma, s, \alpha}$. Moreover, assume that
\begin{equation}\label{cond exponent 2}
\alpha-\beta +(q-p)\left(\dfrac{1-N}{p} \right)<0. 
\end{equation}
Then, problem \eqref{eq} admits a radial weak solution. 
\end{theorem}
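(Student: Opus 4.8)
The plan is to obtain the solution as a minimizer of $\mathcal{J}$ on the Nehari manifold
$$
\mathcal{N}:=\left\{u\in\mathcal{W}^{s,p}_\beta(\mathbb{R}^N)\setminus\{0\}\,:\,\langle\mathcal{J}'(u),u\rangle=0\right\},
$$
equivalently over the ray-projections $t_u u$ with $t_u>0$ chosen so that $\gamma t_u^{p}\|\nabla u\|_p^p+(1-\gamma)t_u^p[u]_{s,p}^p+t_u^p\|u\|_{p,\beta}^p=t_u^q\|u\|_{q,\alpha}^q$; such a $t_u$ exists and is unique because $q>p$, so on $\mathcal{N}$ one has $\mathcal{J}(u)=\left(\tfrac1p-\tfrac1q\right)\big(\gamma\|\nabla u\|_p^p+(1-\gamma)[u]_{s,p}^p+\|u\|_{p,\beta}^p\big)>0$, and $\mathcal{J}$ is coercive there. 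First I would record the functional-analytic setup from Section \ref{sec.1}: the space $\mathcal{W}^{s,p}_\beta(\mathbb{R}^N)$ of radial functions is reflexive, the energy norm $\|u\|^p:=\gamma\|\nabla u\|_p^p+(1-\gamma)[u]_{s,p}^p+\|u\|_{p,\beta}^p$ is well defined, and—this is the crucial ingredient—there is a \emph{compact} embedding $\mathcal{W}^{s,p}_\beta(\mathbb{R}^N)\hookrightarrow L^q_\alpha(\mathbb{R}^N):=L^q(\mathbb{R}^N,|x|^\alpha dx)$ for $p<q<p^*_{\gamma,s,\alpha}$. I expect this compactness is exactly what condition \eqref{cond exponent 2} buys: the confining weight $|x|^\beta$ in the lower-order term, together with the radial symmetry (Strauss-type decay) and the relation $\alpha-\beta+(q-p)\tfrac{1-N}{p}<0$ between the growth of the two weights, forces the tails of a bounded sequence to be uniformly small, upgrading the weak-to-weak continuity to norm convergence in $L^q_\alpha$.

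Granting that, the argument runs as follows. Set $m:=\inf_{\mathcal{N}}\mathcal{J}>0$ (positivity of the infimum follows from the Sobolev/Gagliardo inequality controlling $\|u\|_{q,\alpha}$ by $\|u\|$, which prevents $\|u\|$ from degenerating on $\mathcal{N}$). Take a minimizing sequence $(u_n)\subset\mathcal{N}$; coercivity of $\mathcal{J}|_{\mathcal{N}}$ gives $\|u_n\|\le C$, so up to a subsequence $u_n\rightharpoonup u$ weakly in $\mathcal{W}^{s,p}_\beta(\mathbb{R}^N)$ and, by the compact embedding, $u_n\to u$ strongly in $L^q_\alpha$, hence $\|u\|_{q,\alpha}^q=\lim\|u_n\|_{q,\alpha}^q>0$ (the limit is bounded below because on $\mathcal{N}$ one has $\|u_n\|_{q,\alpha}^q=\|u_n\|^p\ge c>0$), so $u\neq0$. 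By weak lower semicontinuity of the (convex) energy norm, $\|u\|^p\le\liminf\|u_n\|^p$. Now project: let $\tilde u=t_u u\in\mathcal{N}$; then
$$
m\le\mathcal{J}(\tilde u)=\left(\tfrac1p-\tfrac1q\right)t_u^p\|u\|^p\le\left(\tfrac1p-\tfrac1q\right)t_u^p\liminf\|u_n\|^p,
$$
and I would use the defining relation $\|u_n\|^p=\|u_n\|_{q,\alpha}^q$ on $\mathcal{N}$ together with the strong $L^q_\alpha$-convergence to show $t_u\le\liminf$ of the corresponding scaling factors, which pins $t_u\le1$, while $\mathcal{J}(\tilde u)\le\liminf\mathcal{J}(u_n)=m$; combining forces $\mathcal{J}(\tilde u)=m$ and, in fact, $u\in\mathcal{N}$ with $\|u\|^p=\lim\|u_n\|^p$. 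Thus $u$ is a minimizer of $\mathcal{J}$ on $\mathcal{N}$.

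Next I would verify that a minimizer on the Nehari manifold is a genuine critical point of $\mathcal{J}$, by the standard Lagrange-multiplier / fibering argument: writing $G(u):=\langle\mathcal{J}'(u),u\rangle$, one has $\langle G'(u),u\rangle=p\,\|u\|^p-q\,\|u\|_{q,\alpha}^q=(p-q)\|u\|_{q,\alpha}^q<0$ on $\mathcal{N}$ since $q>p$, so $\mathcal{N}$ is a $C^1$ manifold near $u$ and the multiplier must vanish, giving $\mathcal{J}'(u)=0$. Finally, replacing $u$ by $|u|$ does not increase the energy norm (this is classical for $\|\nabla\cdot\|_p$ and for the Gagliardo seminorm, since $\big||u(x)|-|u(y)|\big|\le|u(x)-u(y)|$, and is trivial for $\|\cdot\|_{p,\beta}$ and $\|\cdot\|_{q,\alpha}$), so we may take $u\ge0$; then $u\not\equiv0$ solves $\mathcal{L}_{s,p,\gamma}u+|x|^\beta u^{p-1}=|x|^\alpha u^{q-1}$ weakly, and a strong maximum principle for the mixed operator (as in the references on mixed local–nonlocal equations) upgrades $u\ge0,\ u\not\equiv0$ to $u>0$, yielding the radial weak solution asserted.

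The main obstacle, and the step I would spend the most care on, is the \emph{compact embedding} $\mathcal{W}^{s,p}_\beta(\mathbb{R}^N)\hookrightarrow L^q_\alpha(\mathbb{R}^N)$ for the full subcritical range $p<q<p^*_{\gamma,s,\alpha}$, and in particular identifying \eqref{cond exponent 2} as the sharp condition making it work: one must control the behavior near the origin (where $|x|^\alpha$ may be singular if $\alpha<0$, handled by the Hardy/Sobolev term) and, more delicately, the behavior at infinity, where the interpolation between the $|x|^\beta|u|^p$ control and the radial decay of $u$ must beat the $|x|^\alpha$ growth — this is precisely where $\alpha-\beta<(q-p)\tfrac{N-1}{p}$ enters. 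Everything else (existence/uniqueness of the Nehari projection, coercivity, lower semicontinuity, the Lagrange multiplier computation, and positivity via the maximum principle) is routine once that compactness is in hand.
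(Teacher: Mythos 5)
Your proposal is correct in outline, but it takes a genuinely different route from the paper. The paper applies the Berestycki–Capuzzo Dolcetta–Nirenberg variant of the Mountain Pass Theorem (Lemma \ref{lema.BN}): it verifies the mountain-pass geometry directly via the embedding of Theorem \ref{compact}, proves the Palais–Smale condition by combining that same compact embedding with the $(S)$-property of the operator $\mathcal{\tilde L}_{s,p,\gamma}$ (Proposition \ref{S.prop} and Remark \ref{rem.S.prop}), and obtains nonnegativity of the solution by feeding the map $P(u)=|u|$ into the abstract lemma. You instead minimize $\mathcal{J}$ over the Nehari manifold and use a Lagrange-multiplier/natural-constraint argument. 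Both strategies hinge on exactly the same analytical input — the compact embedding $W^{s,p}_{rad,\beta}(\mathbb{R}^N)\hookrightarrow L^q_\alpha(\mathbb{R}^N)$ under \eqref{cond exponent 2}, which you correctly single out as the crux — and both exploit the $p$-homogeneous vs.\ $q$-homogeneous structure of the energy with $p<q$. A small gain of your approach is that it avoids invoking the $(S)$-property machinery: on the Nehari manifold, $\mathcal{J}$ reduces to a constant multiple of the (convex, weakly l.s.c.) energy norm $\|u\|^p=\gamma\|\nabla u\|_p^p+(1-\gamma)[u]_{s,p}^p+\|u\|_{p,\beta}^p$, so weak lower semicontinuity plus strong $L^q_\alpha$-convergence of the nonlinear term closes the minimization without needing norm convergence of $u_n$ a priori. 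A corresponding cost is the bookkeeping you allude to: one must check that the weak limit, after reprojection $t_u u\in\mathcal{N}$, actually achieves the infimum (your chain $m\le(\tfrac1p-\tfrac1q)t_u^p\|u\|^p\le\liminf\mathcal{J}(u_n)=m$ does this, using $t_u\le1$ which follows from $\|u\|^p\le\liminf\|u_n\|^p=\liminf\|u_n\|_{q,\alpha}^q=\|u\|_{q,\alpha}^q$), and, separately, that $|u|$ can be reprojected to $\mathcal{N}$ without increasing $\mathcal{J}$ (same computation, since $\||u|\|\le\|u\|$ while $\||u|\|_{q,\alpha}=\|u\|_{q,\alpha}$). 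One further remark: you invoke a strong maximum principle to upgrade $u\ge0$, $u\not\equiv0$ to $u>0$; the paper does not do this step and only produces a nonnegative nontrivial critical point, so your claim of strict positivity is an additional assertion that would need a citation or proof for the mixed operator $\mathcal{L}_{s,p,\gamma}$ — it is plausible but not free.
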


The proof of Theorem \ref{teo.1} relies mainly on establishing a suitable compact embedding of the fractional Sobolev space of weighted radial functions into an appropriate weighted Lebesgue space (see Theorem \ref{compact}). This compactness result is the key ingredient that enables the application of a Mountain Pass–type existence theorem (see Lemma \ref{lema.BN}).

We note that our result extends the existence obtained in \cite{PdN, Sin,Ma} to the setting of local-nonlocal nonlinear operators on unbounded domains. Furthermore, the range of parameters given in \eqref{cond exponent 2} improves upon that obtained in \cite{Ma} in the particular case  $p=2$ and $\beta=2$.

\medskip
Regarding regularity, in the following result, by employing a De Giorgi's iteration scheme, we find that radial weak solutions of \eqref{eq} are indeed bounded.

\begin{theorem} \label{teo_boundedness}

Let $s\in (0,1]$ be such that $1/p<s<N/p$,  $\alpha>-sp $ and $p<q<p^{*}_{\gamma, s, \alpha}$. Moreover, assume condition \eqref{cond exponent 2}. Then, for any radial weak   solution $u\in \mathcal{W}^{s,p}_\beta(\mathbb{R}^N)$ to \eqref{eq}, there is a constant $C>0$ such that
$$\|u\|_{L^{\infty}(\mathbb{R}^N)}\leq C.$$
\end{theorem}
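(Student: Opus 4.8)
The plan is to run a De Giorgi iteration directly on the weak formulation, exploiting the confining term $|x|^\beta|u|^{p-2}u$ to control the right-hand side. Fix a radial weak solution $u$ and, for a level $k>0$, set $w_k := (u-k)_+$ and let $A_k := \{x\in\mathbb{R}^N : u(x)>k\}$. Testing the equation with $v=w_k \in \mathcal{W}^{s,p}_\beta(\mathbb{R}^N)$ (which is admissible, since truncation of a radial function in the space remains in the space), I would discard the nonnegative nonlocal cross term using the elementary inequality $\Phi_p(a-b)\big((a-k)_+-(b-k)_+\big)\ge |(a-k)_+-(b-k)_+|^p$ together with the analogous pointwise fact for the gradient term, obtaining
\begin{equation}\label{plan.energy}
\gamma\|\nabla w_k\|_p^p + (1-\gamma)[w_k]_{s,p}^p + \int_{A_k}|x|^\beta |u|^{p-2}u\, w_k\,dx \;\le\; \int_{A_k}|x|^\alpha |u|^{q-2}u\, w_k\,dx.
\end{equation}
On $A_k$ one has $u>k>0$, so $|u|^{p-2}u\,w_k = u^{p-1}(u-k)_+ \ge 0$ and $|u|^{q-2}u\,w_k = u^{q-1}(u-k)_+$; I would bound $u^{q-1} = u^{p-1}\cdot u^{q-p} \le u^{p-1}(w_k+k)^{q-p}$ and split further, so that the right side of \eqref{plan.energy} is controlled by the confining integral $\int_{A_k}|x|^\beta u^{p-1}w_k\,dx$ times a factor involving $\||x|^{\alpha-\beta}\|_{L^\infty(A_k)}$, plus lower-order terms. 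Here the hypothesis \eqref{cond exponent 2} and $\alpha>-sp$ enter: they guarantee that on the relevant region $|x|^{\alpha-\beta}$ is bounded (or that the weight can be absorbed into the Sobolev/confining norms after a Hölder split), which is exactly what makes the De Giorgi machine close.

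Next I would feed \eqref{plan.energy} into the Sobolev/fractional-Sobolev inequality available on $\mathcal{W}^{s,p}_\beta(\mathbb{R}^N)$, together with the weighted embedding of Theorem \ref{compact}, to get a reverse Hölder/energy inequality of the type
\begin{equation}\label{plan.sob}
\|w_k\|_{L^{p^\sharp}(A_k)}^{p} \;\le\; C\,\Big(\gamma\|\nabla w_k\|_p^p + (1-\gamma)[w_k]_{s,p}^p + \|w_k\|_{p,\beta}^p\Big)
\end{equation}
for a suitable Sobolev exponent $p^\sharp>p$. Combining \eqref{plan.energy}, \eqref{plan.sob} and Hölder's inequality on $A_k$ (using $|A_k|<\infty$, which follows since $u\in L^q_\alpha$ and $|x|^\alpha$ is locally bounded below away from the origin) produces the standard nonlinear recursion
\[
U(k) \le \frac{C\, b^{h}}{(k-j)^{\mu}}\, U(j)^{1+\varepsilon}\qquad (k>j>0),
\]
where $U(k) := \int_{A_k}|x|^\beta w_k^p\,dx$ (or a comparable excess functional), with constants $b>1$, $\mu>0$ and $\varepsilon>0$ depending only on $N,p,s,q,\alpha,\beta$. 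A standard fast-geometric-convergence lemma (De Giorgi–Ladyzhenskaya–Uraltseva) then yields $U(k_\infty)=0$ for some finite $k_\infty$ bounded by $C\,U(k_0)^{\varepsilon/\mu}\,2^{h/(\mu\varepsilon)}$ with $k_0$ fixed (say $k_0=1$), which means $u\le k_\infty$ a.e.; applying the same argument to $-u$ — or simply using $u>0$ and that the equation for $u$ alone controls $\|u\|_\infty$ from above — gives the two-sided bound $\|u\|_{L^\infty(\mathbb{R}^N)}\le C$.

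The main obstacle I anticipate is not the iteration itself but making the weight bookkeeping rigorous: the term $|x|^\alpha$ is singular/degenerate at the origin and grows (or decays) at infinity, so the passage from \eqref{plan.energy} to a clean recursion requires carefully splitting $\mathbb{R}^N$ into a neighborhood of $0$, an annular region, and a neighborhood of infinity, estimating $|x|^{\alpha-\beta}$ on each piece and using \eqref{cond exponent 2} to ensure the super-level sets $A_k$ (for $k$ large) avoid the bad zones or that the bad-zone contribution is absorbable. A secondary technical point is justifying that $w_k$ is an admissible test function in $\mathcal{W}^{s,p}_\beta(\mathbb{R}^N)$ and that the pointwise truncation inequalities for the Gagliardo seminorm hold in the mixed setting; both are by now standard (cf. the references \cite{SV, BDVV, DSFV} on mixed operators) and I would cite them rather than reprove them.
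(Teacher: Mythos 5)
Your high-level plan --- De Giorgi iteration on the truncations, testing the weak formulation, dropping nonnegative cross-terms, feeding back through a Sobolev-type inequality, and closing with a fast-geometric-convergence lemma --- is the same strategy the paper uses, and most of your reductions (admissibility of the truncated test function, the pointwise inequality $\Phi_p(a-b)\bigl((a-k)_+-(b-k)_+\bigr)\ge |(a-k)_+-(b-k)_+|^p$, the energy estimate) are correct and match the paper.

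However, there is a genuine gap at the decisive step. You assert that the hypotheses $\alpha>-sp$ and \eqref{cond exponent 2} ``guarantee that on the relevant region $|x|^{\alpha-\beta}$ is bounded (or that the weight can be absorbed).'' Neither is justified, and the first is simply false in general: condition \eqref{cond exponent 2} reads $\alpha-\beta<(q-p)\frac{N-1}{p}$, which allows $\alpha>\beta$ (so $|x|^{\alpha-\beta}$ may blow up at infinity) and of course also allows $\alpha<\beta$ (so $|x|^{\alpha-\beta}$ may be singular at the origin); nothing in the hypotheses confines the super-level sets $A_k$ to a region where the ratio is controlled. The ``absorption'' you invoke is precisely the nontrivial content of the argument, and you flag it yourself as ``the main obstacle'' without resolving it. The paper closes exactly this gap by invoking Lemma \ref{continuity}: the weighted interpolation estimate
\[
\int_{\mathbb{R}^N}|x|^\alpha|v|^q\,dx \le C\,\|v\|_{s,p}^{\eta}\,\|v\|_{p,\beta}^{\omega},\qquad \eta,\omega>0,
\]
applied to $v=w_{k+1}$. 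Combined with the energy estimate (your \eqref{plan.energy}) and the pointwise comparison $u\le(2^{k+1}-1)w_k$ on $\{w_{k+1}>0\}$, this yields directly the superlinear recursion
\[
\int_{\mathbb{R}^N}|x|^\alpha|w_{k+1}|^q\,dx\ \le\ C_{k+1}\left(\int_{\mathbb{R}^N}|x|^\alpha|w_k|^q\,dx\right)^{1+\delta},\qquad \delta>0,
\]
with no weight bookkeeping required. Your proposal never actually uses Lemma \ref{continuity} (you cite Theorem \ref{compact}, which is the compactness statement, not the quantitative estimate you need), and so the recursion you display is not derived --- it is asserted.

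Two secondary differences worth noting. First, the paper iterates on levels $1-2^{-k}\nearrow 1$ (Franzina--Palatucci style, cf.\ \cite{FP}) rather than $k\to\infty$; this makes the iteration start from the single quantity $\int|x|^\alpha u_+^q$ and pairs cleanly with the numerical lemma \cite[Lemma 13]{FSV}. Second, because that numerical lemma needs a smallness hypothesis on the initial datum, the paper applies the whole argument to $u/C$ for $C$ large and concludes $\|u\|_\infty\le C$; your proposal would instead need a separate argument that $U(k_0)$ is small for some large $k_0$ and that the iteration constants are uniform, which is less transparent. If you replace your weight-absorption claim by an explicit appeal to Lemma \ref{continuity} and adopt either the level-choice-plus-rescaling normalization or a careful tracking of the starting data, your outline would turn into a correct proof.
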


Finally, by applying a generalization of Pohozaev’s existence result established in \cite{ROS}, we show that the value $\frac{p(N+\alpha)}{N-sp}$ serves as a threshold for the existence of bounded solutions with a weighted control of the gradient.

\begin{theorem} \label{teo.no.exist}

Let $s\in (0,1]$ be such that $1/p<s<N/p$, $\alpha>-sp$ and  $\beta>-sp$. Then, for all $q>p^{*}_{\gamma, s, \alpha}$, problem \eqref{eq} has no solutions $u\in \mathcal{\tilde X}(\mathbb{R}^N)$, being
$$
\mathcal{\tilde X}(\mathbb{R}^N) =\{ u \in \mathcal{W}^{s,p}_\beta(\mathbb{R}^N)\cap L^\infty(\mathbb{R}^N) \text{ such that } |\nabla u(x)||x| \in L^r(\mathbb{R}^N) \text{ for some } r>1\}.
$$
\end{theorem}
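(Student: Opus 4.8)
The plan is to confront two integral identities satisfied by any putative solution $u\in\mathcal{\tilde X}(\mathbb{R}^N)$ of \eqref{eq} — a Nehari-type identity obtained by testing with $u$, and a Pohozaev-type identity obtained from a dilation test field — and to show that, for $q>p^{*}_{\gamma,s,\alpha}$, they are incompatible with $u>0$. Throughout write $A:=\|\nabla u\|_p^p$, $B:=[u]_{s,p}^p$, $C:=\|u\|_{p,\beta}^p$ and $D:=\|u\|_{q,\alpha}^q$; all are nonnegative and, since $u>0$, we have $C,D>0$. The first identity is immediate: because $u$ itself lies in $\mathcal{W}^{s,p}_\beta(\mathbb{R}^N)$, taking $v=u$ in $\langle\mathcal{J}'(u),v\rangle=0$ yields the \emph{Nehari identity}
\[
\gamma A+(1-\gamma)B+C=D.
\]

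The second identity is where the real work lies. Formally, testing \eqref{eq} against the dilation field $v=x\cdot\nabla u$ and integrating by parts produces: from the local $p$-Laplacian, the term $\gamma\frac{N-p}{p}A$; from the fractional $p$-Laplacian, the term $(1-\gamma)\frac{N-sp}{p}B$, which is exactly the content of the (generalized) Pohozaev identity of \cite{ROS} for the fractional $p$-Laplacian; and, via the pointwise identity $|x|^\beta|u|^{p-2}u\,(x\cdot\nabla u)=\frac1p\,x\cdot\nabla\big(|x|^\beta|u|^p\big)-\frac{\beta}{p}|x|^\beta|u|^p$ together with its analogue for the right-hand side, the lower-order and source terms contribute $-\frac{N+\beta}{p}C$ and $-\frac{N+\alpha}{q}D$. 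Collecting these, one reaches the \emph{Pohozaev identity}
\[
\gamma\,\frac{N-p}{p}\,A+(1-\gamma)\,\frac{N-sp}{p}\,B+\frac{N+\beta}{p}\,C=\frac{N+\alpha}{q}\,D.
\]
I expect this step to be the main obstacle, for two reasons. First, $x\cdot\nabla u$ is not an admissible test object a priori, so one must regularize or truncate, and then control the flux through large spheres together with the contribution near the origin; this is precisely where $u\in L^\infty(\mathbb{R}^N)$ and $|\nabla u(x)||x|\in L^r(\mathbb{R}^N)$ enter, the integrability of $|\nabla u(x)||x|$ supplying the decay needed for the boundary term at infinity to vanish along a suitable sequence of radii. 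Second, for the nonlocal part one has to reproduce the scaling/commutator argument behind \cite{ROS} in the present mixed, weighted framework, and it is here that $\alpha>-sp$ and $\beta>p(1-s)$ are invoked to guarantee convergence of every integral appearing in the derivation.

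Granting the two identities, the conclusion follows by elementary algebra, splitting according to whether the local term is present. If $\gamma\in(0,1]$, subtract $\frac{N-p}{p}$ times the Nehari identity from the Pohozaev identity: the $A$-terms cancel and one gets
\[
(1-\gamma)(1-s)\,B+\frac{\beta+p}{p}\,C=\Big(\frac{N+\alpha}{q}-\frac{N-p}{p}\Big)\,D,
\]
whose left-hand side is $\ge0$ (since $s\le1$ and $\beta>p(1-s)\ge0$), while $q>p^{*}_{\gamma,s,\alpha}=\frac{p(N+\alpha)}{N-p}$ forces $\frac{N+\alpha}{q}<\frac{N-p}{p}$, so the right-hand side is $<0$ because $D>0$: a contradiction. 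If $\gamma=0$, subtract instead $\frac{N-sp}{p}$ times the Nehari identity from the Pohozaev identity: the $B$-terms cancel and one gets
\[
\frac{\beta+sp}{p}\,C=\Big(\frac{N+\alpha}{q}-\frac{N-sp}{p}\Big)\,D,
\]
whose left-hand side is $>0$ (since $\beta>-sp$ and $C>0$) while $q>p^{*}_{0,s,\alpha}=\frac{p(N+\alpha)}{N-sp}$ makes the right-hand side $<0$: again a contradiction. In either case problem \eqref{eq} admits no solution in $\mathcal{\tilde X}(\mathbb{R}^N)$ once $q>p^{*}_{\gamma,s,\alpha}$, which is the assertion of Theorem \ref{teo.no.exist}.
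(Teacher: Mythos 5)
Your plan is in the same Pohozaev/dilation family as the paper's argument, and the algebraic conclusion (the Nehari identity, the case split in $\gamma$, and the resulting contradiction for $q>p^{*}_{\gamma,s,\alpha}$) is correct. But there is a genuine gap, and you flag it yourself: the Pohozaev-type identity
\[
\gamma\,\tfrac{N-p}{p}\,A+(1-\gamma)\,\tfrac{N-sp}{p}\,B+\tfrac{N+\beta}{p}\,C=\tfrac{N+\alpha}{q}\,D
\]
is asserted only formally. Justifying testing with $x\cdot\nabla u$ (regularization, cut-offs, the flux through large spheres, the nonlocal commutator for $(-\Delta_p)^s$, the weight singularities near the origin) is precisely the analytic content that must be supplied, and it is far from routine. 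In fact, an exact Pohozaev identity for the fractional $p$-Laplacian (for general $p$, let alone with weights and a local part added) is not available off the shelf; the Ros-Oton--Serra Pohozaev identity you invoke is for the fractional Laplacian, $p=2$. So what you present is a correct blueprint, not a proof.

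The paper's proof sidesteps this difficulty entirely. Rather than deriving an exact identity, it checks the hypotheses of the general nonexistence criterion \cite[Prop.~1.4]{ROS}, which is built on a dilation argument and needs only an inequality at the level of the norm: the scaling bound $\|u_\lambda\|_{s,p,\beta}\le\lambda^{-\tau}\|u\|_{s,p,\beta}$ with $\tau=\tfrac{N-sp}{p}$ for $\lambda>1$ (here is where $\beta>p(1-s)$ is used), the pointwise supercriticality condition $\tau tf(x,t)>NF(x,t)+x\cdot F_x(x,t)$ (equivalent to $q>\tfrac{N+\alpha}{\tau}=p^{*}_{\gamma,s,\alpha}$), the boundedness $u\in L^\infty$, and the difference-quotient bound coming from $|\nabla u(x)||x|\in L^r$, which the paper's preliminary computation \eqref{eq.lam} shows is enough for \cite[Lemma~4.2]{ROS} to hold with $\Omega=\mathbb{R}^N$. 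This is materially shorter and more robust than your plan, because it does not rely on producing a clean equality; to close your gap you would essentially have to re-derive (and carefully adapt) the Ros-Oton--Serra machinery for the mixed, weighted $p$-Laplacian-type operator, which is the very thing the citation is designed to avoid.
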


\bigskip

The paper is organized as follows. In Section \ref{sec.1} we provide the functional framework and preliminary definitions. In the next Section \ref{sec.2}, we quote the main properties of radial functions related to our problem,    specially, Strauss' type lemmas, generalized Rother's Lemma and  compactness of embeddings between fractional Sobolev spaces of radial functions and appropriate Lebesgue weighted spaces. In Section \ref{sec.3}, we state the existence of solutions to problem \eqref{eq} through variational methods, and in Section \ref{B} we prove the boundedness of radial solutions. Finally, Section \ref{sec.6} is devoted to prove our non-existence result. For convenience of the reader, we add an Appendix with the Brezis-Nirenberg version of the Mountain Pass Theorem  that we employ in the paper.

\section{Preliminaries} \label{sec.1}

\subsection{Functional framework}

Given $p>1$, $\beta>0$ and $s\in(0,1]$  we consider the weighted fractional Sobolev space
$$
W_{\beta}^{s, p}(\mathbb{R}^N)= 
 \{ u \in L^p_\beta(\mathbb{R}^N) \colon [u]_{s,p}<\infty\}
$$
endowed with the norm  
$$
\|u\|_{s, p, \beta}:= \|u\|_{p,\beta} + [u]_{s, p},
$$
where $\|u\|_{p, \beta}:=\left(\int_{\mathbb{R}^N} |x|^\beta|u|^p\,dx\right)^{1/p}$ and the seminorm is given by
\begin{align*}
[u]_{s, p}=
\begin{cases}
\displaystyle\left(\int_{\mathbb{R}^N}\int_{\mathbb{R}^N}\dfrac{|u(x)-u(y)|^p}{|x-y|^{sp+N}}\,dx\,dy \right)^{1/p} &\text{ if } s\in (0,1)\\
\displaystyle\left(\int_{\mathbb{R}^N} |\nabla u|^p\,dx\right)^{1/p} & \text{ if } s=1.
\end{cases} 
\end{align*}
Here, the following weighted Lebesgue spaces are used
$$L^p_\beta(\mathbb{R}^N):= \left\lbrace u:\mathbb{R}^N\to \mathbb{R}: \int_{\mathbb{R}^N}|x|^\beta|u|^p\,dx<\infty\right\rbrace$$
with the corresponding norm $\|u\|_{p, \beta}$.
When $\beta=0$ we just write $\|u\|_{s,p}$ and $\|u\|_p$ instead of $\|u\|_{s,p,0}$ and $\|u\|_{p,0}$, respectively. Moreover, we write $W^{s,p}(\mathbb{R}^N)$ in place of $W^{s,p}_0(\mathbb{R}^N)$.
\\The corresponding subset of radial functions is defined as
$$W_{rad, \beta}^{s, p}(\mathbb{R}^N)= \left\lbrace u \in W_{\beta}^{s,p}(\mathbb{R}^N): \text{ $u$ is radial} \right\rbrace.
$$

With the aforementioned definitions,  the natural framework for introducing weak solutions of \eqref{eq} is the space
\begin{align}\label{space}
\mathcal{W}^{s,p}_\beta(\mathbb{R}^N)=
\begin{cases}
W^{s,p}_{rad,\beta}(\mathbb{R}^N) & \text{ if } \gamma=0\\
W^{1,p}_{rad,\beta}(\mathbb{R}^N)  & \text{ if } \gamma\in(0,1] \text{ and } s\in (0,1).
\end{cases}
\end{align}
endowed with 
$\|\cdot\|_{s,p,\beta}$ when $\gamma=0$ and $s\in (0, 1)$, and $\|\cdot\|_{1,p,\beta}$ when $\gamma \in (0, 1]$.


Once the functional spaces have been introduced,  the relation between the parameter $\gamma \in [0,1]$ in the H\'enon equation \eqref{eq} and the fractional parameter $s\in (0,1]$ becomes clear: 
\begin{itemize}
	\item[(i)] Purely nonlocal case: $\gamma=0$ and $s\in (0,1)$;
	\item[(ii)] Purely local case: $\gamma=1$ (identified with $s=1$);
	\item[(iii)] Mixed local-nonlocal case: $\gamma\in (0,1)$ and $s\in (0,1)$.
\end{itemize}

We also denote the critical Sobolev exponent as $p^*_s := \frac{Np}{N-sp}~$ when $s\in (0,1]$. We will also use the notation  $p^*:=p^*_1$ in the particular case of $s=1$.

\subsection{The operator $\mathcal{L}_{s,p,\gamma}$}

Given $\gamma\in [0,1]$, $s\in (0,1]$, depending on $\gamma$ as follows, and $1<p<q$, we define the mixed local-nonlocal operator as

\begin{equation*}
\mathcal{L}_{s,p,\gamma}\, u := \begin{cases}
(-\Delta_p) u, \quad & \text{if}\quad \gamma=1 \text{ (identified with $s=1$)},\\
\gamma (-\Delta_p) u+ (1-\gamma)(-\Delta_p)^s u, \quad & \text{if}\quad \gamma\in (0,1), \, s\in (0,1),\\(-\Delta_p)^s u, \quad & \text{if}\quad \gamma=0,  \, s\in (0,1).
\end{cases}
\end{equation*}

We shall write $\mathcal{L}_{s,p,\gamma}$ more succinctly as
$$
\mathcal{L}_{s,p,\gamma} \,u :=\gamma (-\Delta)_p u+ (1-\gamma)(-\Delta)_p^s u 
$$
for all $u\in \mathcal{W}^{s,p}_\beta(\mathbb{R}^N)$. Here, we denote the fractional $p-$Laplacian of order $s\in(0,1]$ as
\begin{align*}
(-\Delta_p)^s u(x) :=
\begin{cases}
 \text{p.v.}\displaystyle  \int_{\mathbb{R}^N}  \frac{|u(x)-u(y)|^{p-2}(u(x)-u(y))}{|x-y|^{N+sp}} \,dy & \text{ if } s\in (0,1)\\[15pt]
-\text{div}\,(|\nabla u(x)|^{p-2}\nabla u(x)) & \text{ if } s=1.
\end{cases}
\end{align*}
Thus, $(-\Delta_p)^1 u$ stands for $-\Delta_p u$.

We point out that since $W^{1, p}(\mathbb{R}^n)\subset W^{s, p}(\mathbb{R}^n)$ for all $s\in (0, 1)$, the following representation formula holds: given $u\in \mathcal{W}^{s,p}_\beta(\mathbb{R}^N)$,
\begin{align}\label{representation}
\langle \mathcal{L}_{s,p,\gamma} \,u ,v \rangle &=\gamma \int_{\mathbb{R}^N} \Phi_p(\nabla u)\cdot \nabla v\,dx + (1-\gamma)  \int_{\mathbb{R}^N}\int_{\mathbb{R}^N} \frac{\Phi_p(u(x)-u(y))(v(x)-v(y))}{|x-y|^{N+sp}}\,dxdy 
\end{align} 
for all $v\in \mathcal{W}^{s,p}_\beta(\mathbb{R}^N)$, being $\Phi_p(t):=|t|^{p-2}t$ for $t\in \mathbb{R}$.  Therefore,  we say that $u\in \mathcal{W}^{s,p}_\beta(\mathbb{R}^N)$ is a \emph{weak solution} of \eqref{eq} if
$$
\langle \mathcal{L}_{s,p,\gamma}u,v \rangle  + \int_{\mathbb{R}^N} |x|^\beta \Phi_p(u)v\,dx  = \int_{\mathbb{R}^N} |x|^\alpha \Phi_q(u)v\,dx \qquad \forall v\in \mathcal{W}^{s,p}_\beta(\mathbb{R}^N).
$$
Observe that weak solutions are critical points of the functional 
\begin{equation*}
\mathcal{J}(u):=\frac{\gamma}{p}\|\nabla u\|_p^p + \frac{1-\gamma}{p}  [u]_{s,p}^p + \frac{1}{p}\|u\|_{p,\beta}^p- \frac{1}{q}\|u\|_{q, \alpha}^q
\end{equation*}
defined on $\mathcal{W}^{s,p}_\beta(\mathbb{R}^N)$, whose Fr\'echet derivative is given by 
\begin{align*}
\langle \mathcal{J}'(u),v \rangle &= \langle \mathcal{L}_{s,p,\gamma} u,v \rangle  +\int_{\mathbb{R}^N} |x|^\beta \Phi_p(u)v\,dx - \int_{\mathbb{R}^N} |x|^\alpha \Phi_q(u)v\,dx \qquad \forall v\in \mathcal{W}^{s,p}_\beta(\mathbb{R}^N).
\end{align*}

\section{Behavior of functions in $W^{s,p}_{rad, \beta}(\mathbb{R}^N)$}\label{sec.2}

The Strauss Radial Lemma is a powerful tool that quantifies the decay of radial functions at infinity. The following version is derived  from Theorems 10 and 13 in \cite{SS}.

\begin{lemma}[Strauss' Lemma]\label{Strauss lemma}
Assume $1/p < s \leq 1$ and $sp<N$. Then, there is a constant $C>0$ such that for any $u\in W^{s, p}_{rad}(\mathbb{R}^N)$, 
\begin{equation}
|u(x)|\leq C|x|^{(1-N)/p}\|u\|_{s, p},
\end{equation}for all $x\in \mathbb{R}^N$.
\end{lemma}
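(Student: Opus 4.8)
The plan is to split according to whether $s=1$, where the classical one–dimensional calculus of Strauss applies, or $s\in(1/p,1)$, where we lean on the extended radial lemma of \cite{SS}. In either case the inequality carries no information for $|x|$ small, since its right–hand side is then unbounded (recall $sp<N$ forces $(1-N)/p<0$); the substance is the decay as $|x|\to\infty$.

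For $s=1$, take first $u$ radial, smooth and compactly supported, and write $u(x)=\tilde u(r)$ with $r=|x|$. By the fundamental theorem of calculus,
\[
|\tilde u(r)|^{p}=-\int_{r}^{\infty}\frac{d}{dt}|\tilde u(t)|^{p}\,dt\le p\int_{r}^{\infty}|\tilde u(t)|^{p-1}|\tilde u'(t)|\,dt .
\]
Distributing the factor $1=t^{(N-1)(p-1)/p}\cdot t^{(N-1)/p}\cdot t^{-(N-1)}$, using that $t\mapsto t^{-(N-1)}$ is nonincreasing on $[r,\infty)$, and applying H\"older with exponents $p/(p-1)$ and $p$ gives
\[
|\tilde u(r)|^{p}\le p\,r^{-(N-1)}\Big(\int_{r}^{\infty}|\tilde u(t)|^{p}t^{N-1}\,dt\Big)^{\frac{p-1}{p}}\Big(\int_{r}^{\infty}|\tilde u'(t)|^{p}t^{N-1}\,dt\Big)^{\frac1p}.
\]
Passing to $N$–dimensional polar integrals and using Young's inequality ($a^{p-1}b\le a^{p}+b^{p}$ up to constants, and $a^{p}+b^{p}\le(a+b)^{p}$) yields $|u(x)|^{p}\le C|x|^{1-N}\|u\|_{1,p}^{p}$. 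Thus $u\mapsto\sup_{x}|x|^{(N-1)/p}|u(x)|$ is bounded by $C\|u\|_{1,p}$ on the radial $C_{c}^{\infty}$ functions, and since these are dense in $W^{1,p}_{rad}(\mathbb{R}^{N})$ (radial mollification and radial cutoffs preserve radiality), the bound extends to all of $W^{1,p}_{rad}(\mathbb{R}^{N})$.

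For $s\in(1/p,1)$ I would not reprove the estimate but derive it from \cite{SS}. Here $W^{s,p}(\mathbb{R}^{N})$ is the Sobolev–Slobodeckij space, which coincides with the Besov space $B^{s}_{p,p}(\mathbb{R}^{N})$ with equivalence of norms; the hypotheses $1/p<s$ and $sp<N$ place the parameters in the range where the extended Strauss lemma applies. Concretely, one invokes Theorems 10 and 13 of \cite{SS} for radial elements of $B^{s}_{p,p}$: they supply the pointwise bound $|u(x)|\le C|x|^{-(N-1)/p}\|u\|_{B^{s}_{p,p}}$ away from the origin (and local boundedness near it, which in any case follows from the one–dimensional embedding $W^{s,p}(I)\hookrightarrow L^{\infty}(I)$, valid because $sp>1$). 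What remains is purely formal: checking that $1/p<s<N/p$ makes the cited statement nonvacuous, and replacing $\|u\|_{B^{s}_{p,p}}$ by the equivalent Gagliardo norm $\|u\|_{s,p}$.

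The main obstacle is the nonlocal case, and inside it the step of turning the $N$–dimensional Gagliardo double integral into a one–dimensional quantity amenable to a Sobolev embedding. Morally one wants a radial fractional Morrey inequality of the type $|\tilde u(t)-\tilde u(\tau)|^{p}\min(t,\tau)^{N-1}\lesssim|t-\tau|^{sp-1}[u]_{s,p}^{p}$, which together with $\tilde u\in L^{p}((0,\infty),t^{N-1}\,dt)$ would force $t^{(N-1)/p}\tilde u(t)\to 0$ and give the conclusion. But, in contrast with the local case, the nonlocal seminorm does not decouple along rays and the weight $t^{(N-1)/p}$ is not a harmless multiplier on $W^{s,p}(0,\infty)$, so proving such an inequality directly is delicate; this is exactly the technical content of \cite{SS}, which is why quoting it is the efficient path, leaving us only the bookkeeping described above.
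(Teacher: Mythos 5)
Your proposal is correct and follows essentially the same route as the paper: for $s\in(1/p,1)$ both you and the authors simply invoke Theorems 10 and 13 of \cite{SS} (using the equivalence $W^{s,p}=B^{s}_{p,p}$), which is where the real content lies. The only difference is that you additionally spell out the classical one-dimensional fundamental-theorem-of-calculus argument for $s=1$, and that computation is carried out correctly.
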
~

\begin{remark}\label{cont with beta}We observe that in the previous lemma, we may replace the norm $\|u\|_{s, p}$ with the norm $\|u\|_{s, p, \beta}$. Indeed, given $s\in (0, 1]$ and $u\in W_{\beta}^{s, p}(\mathbb{R}^N)$ with compact support,
\begin{equation}
\begin{split}
\|u\|^p_{p}&= \int_{|x|<1}|u|^p\,dx + \int_{|x|\geq 1}|u|^p\,dx \\ &
\leq \left( \int_{|x|<1}|u|^{p^*_s}\,dx\right)^{p/p_s^{*}}\left(|B| \right)^{N/sp}+  \int_{|x|\geq 1}|x|^\beta|u|^p\,dx\\& \leq 
C\|u\|^p_{p_s^{*}}+\|u\|^p_{p,\beta}\\& \leq C[u]_{s, p}^p +\|u\|^p_{p,\beta} \leq  C\|u\|^p_{s, p, \beta},
\end{split}
\end{equation}
where, for $s\in (0,1]$  we have used the inequality (see for instance, \cite[Theorem 6.5]{NPV})
$$
\|u\|_{p_s^{*}} \leq C[u]_{s, p}
$$
being $C$ a positive constant depending of $N$, $s$ and $p$.

By density, this proves that $\|u\|_{s,p}\leq C\|u\|_{s, p, \beta}$ in $W_{ \beta}^{s, p}(\mathbb{R}^N)$ and the continuity of the embedding $W_{ \beta}^{s, p}(\mathbb{R}^N) \subset W^{s, p}(\mathbb{R}^N)$.  
\end{remark}

\begin{remark}
Recall that (see for instance \cite[Lemma 4.2]{FBS}), for $u\in W^{1,p}(\mathbb{R}^N)$, $1\leq p <\infty$ and $s\in (0,1)$, it holds that
\begin{equation} \label{cota.local.nolocal}
[u]_{s,p}^p \leq \frac{N\omega_N}{p} \left(\frac{2^p}{s}\|u\|_p^p + \frac{1}{1-s} \|\nabla u\|_{p}^p\right).
\end{equation}
Then, by \eqref{cota.local.nolocal}  and Remark \ref{cont with beta}, 
\begin{equation} \label{cota.lnl}
[u]_{s,p}^p \leq C(\|u\|_p^p + \|\nabla u\|_{p}^p) \leq C(\|u\|_{1, p,\beta}^p + \|\nabla u\|_p^p)  \leq C \|u\|_{1,p,\beta}^p.
\end{equation}
 \end{remark}

We quote the following continuity embedding from  \cite[Theorem 6.2]{EP}:

\begin{lemma}\label{imbedding H}
Let $0<s<N/p$, $c>-N $, $(1-sp)c \leq (N-1)sp$ and let $q= \frac{p(N+c)}{N-sp}$. Then, there is $C>0$ such that
\begin{equation}
\left(\int_{\mathbb{R}^N}|x|^c |u|^q\,dx\right)^{1/q} \leq C\|u\|_{H^{s, p}},
\end{equation}for all radial $u\in H^{s, p}(\mathbb{R}^N)$.
\end{lemma}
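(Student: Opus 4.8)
Write $p^{*}=\frac{Np}{N-sp}$ and recall the fractional Sobolev embedding $\|u\|_{L^{p^{*}}(\mathbb{R}^N)}\le C[u]_{s,p}\le C\|u\|_{H^{s,p}}$, valid whenever $sp<N$ (see \cite{NPV}). By density it suffices to prove the estimate for nonnegative radial $u$ with compact support. A preliminary remark fixes the bookkeeping: the dilation $u\mapsto u(\lambda\,\cdot)$ multiplies $\|u\|_{L^q_c}$ by $\lambda^{-(N+c)/q}$ and $[u]_{s,p}$ by $\lambda^{(sp-N)/p}$, and these powers agree precisely when $q=\frac{p(N+c)}{N-sp}$; this is why, in each regime below, the weight that survives after interpolation turns out to have exactly the right sign. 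The plan is to split $\mathbb{R}^N=B_1\cup(\mathbb{R}^N\setminus B_1)$ and to treat the two pieces by different interpolations: away from the origin one uses the Strauss-type radial decay to trade powers of $|u|$ for negative powers of $|x|$, while near the origin one uses the Sobolev embedding to absorb the (possibly singular) weight $|x|^{c}$, the hypothesis $c>-N$ ensuring local integrability.

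\emph{The outer region.} Here the decisive tool is Lemma \ref{Strauss lemma} (for $1/p<s\le1$; for $s\le1/p$ one invokes the analogous radial decay estimate, which is what lets Lemma \ref{imbedding H} range over all $0<s<N/p$). Assuming first $q\ge p$, equivalently $c\ge -sp$, I would write $|u|^{q}=|u|^{q-p}\,|u|^{p}$ and estimate the first factor by $|u(x)|^{q-p}\le C\,|x|^{(1-N)(q-p)/p}\|u\|_{s,p}^{q-p}$, getting
\begin{equation*}
\int_{\mathbb{R}^N\setminus B_1}|x|^{c}|u|^{q}\,dx\le C\,\|u\|_{s,p}^{q-p}\int_{\mathbb{R}^N\setminus B_1}|x|^{\,c+(q-p)\frac{1-N}{p}}\,|u|^{p}\,dx .
\end{equation*}
Using $q-p=\frac{p(c+sp)}{N-sp}$, the exponent of $|x|$ equals $\frac{c(1-sp)-sp(N-1)}{N-sp}$, which by the hypothesis $(1-sp)c\le(N-1)ps$ is $\le 0$; hence that power of $|x|$ is $\le 1$ on $\{|x|\ge1\}$ and the last integral is bounded by $\|u\|_{p}^{p}\le\|u\|_{s,p}^{p}$, so $\int_{\mathbb{R}^N\setminus B_1}|x|^{c}|u|^{q}\le C\|u\|_{s,p}^{q}$. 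In the remaining range $-N<c<-sp$ one has $q<p$ and $|x|^{c}\le 1$ outside $B_1$ anyway, and the interpolation above is replaced by a H\"older pairing of $|x|^{c}|u|^{q}$ with a suitable negative power of $|x|$ together with Lemma \ref{Strauss lemma}.

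\emph{The inner region.} On $B_1$ three subcases appear, according to the sign of $c$ and whether $q\lessgtr p^{*}$ (note $q-p^{*}=\frac{pc}{N-sp}$, so $q\gtrless p^{*}$ iff $c\gtrless 0$). If $c\ge0$ and $q\le p^{*}$, then $|x|^{c}\le1$ and H\"older against $L^{p^{*}}(B_1)$ gives $\int_{B_1}|x|^{c}|u|^{q}\le|B_1|^{1-q/p^{*}}\|u\|_{p^{*}}^{q}\le C\|u\|_{s,p}^{q}$. If $q>p^{*}$ (so $c>0$) I would write $|u|^{q}=|u|^{q-p^{*}}|u|^{p^{*}}$, bound $|u|^{q-p^{*}}$ by Lemma \ref{Strauss lemma}, and observe that the residual weight has exponent $\frac{c(1-sp)}{N-sp}$, which is $\ge0$ exactly when $sp\le1$; if $sp>1$ one applies Lemma \ref{Strauss lemma} instead to a power of $u$ chosen so that the residual weight exponent stays above $-N$ while the remaining Lebesgue exponent stays below $p^{*}$. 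If $c<0$ (so $q<p^{*}$) the weight is singular at the origin and one is exactly at the endpoint of H\"older's inequality, since $c\,(p^{*}/q)'=-N$; this forces a scale-by-scale argument. Decomposing $B_1=\bigcup_{k\le0}A_{k}$ with $A_{k}=\{2^{k}\le|x|<2^{k+1}\}$, on each annulus one estimates $\int_{A_k}|u|^{q}$ by interpolating between $\|u\|_{L^{p^{*}}(A_k)}$ and the Strauss bound $\|u\|_{L^{\infty}(A_k)}\le C\,2^{k(1-N)/p}\|u\|_{s,p}$, and then sums a geometric-type series in $k$, the strict inequality $c>-N$ providing the summability. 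Adding the two regions gives the claimed estimate.

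\emph{Expected main obstacle.} The outer region is essentially dictated by Lemma \ref{Strauss lemma} and exponent algebra, so the real work — and the place where the sharpness of both $c>-N$ and $(1-sp)c\le(N-1)ps$ is felt — is the behaviour near the origin when $c<0$ or $q>p^{*}$ with $sp>1$: there a crude pointwise bound like Lemma \ref{Strauss lemma} is too singular at $x=0$, and one needs a scale-localized radial Sobolev estimate on each annulus (morally, $|u(x)|$ controlled by $|x|^{-(N-sp)/p}$ times the seminorm of $u$ on a dyadic neighbourhood of $x$), together with careful control of the summation over dyadic scales. Establishing such a refined radial estimate for the full range $0<s<N/p$ — or extracting it from \cite{EP,SS} — is the key ingredient I would need to complete the argument.
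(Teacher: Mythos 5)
The paper does not prove Lemma \ref{imbedding H} at all: it is quoted directly from \cite[Theorem 6.2]{EP} (De N\'apoli--Drelichman), so there is no internal argument to compare yours against. Your sketch outlines the standard strategy for such radial embeddings (split at $|x|=1$, Strauss decay outside, Sobolev plus dyadic annuli inside), and the exponent algebra in the outer region is correct; but as written the proposal has genuine gaps — which you yourself flag in your closing paragraph — and so it does not amount to a proof.

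Concretely: (1) the argument is carried out entirely in the Gagliardo/Sobolev quantities $[u]_{s,p}$, $\|u\|_{s,p}$, whereas the lemma concerns the Bessel potential norm $\|u\|_{H^{s,p}}$; the inequality $[u]_{s,p}\le C\|u\|_{H^{s,p}}$ invoked in your first line holds only for $p\ge 2$ (for $1<p<2$ the embedding between $H^{s,p}=F^{s}_{p,2}$ and $W^{s,p}=B^{s}_{p,p}$ runs in the opposite direction), so the reduction is not free. (2) The decisive outer-region tool, the decay $|u(x)|\le C|x|^{(1-N)/p}\|u\|_{s,p}$, is available only for $s>1/p$ (this is exactly the hypothesis of Lemma \ref{Strauss lemma}), while Lemma \ref{imbedding H} covers all $0<s<N/p$. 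For $sp\le 1$ radial functions in these spaces need not be essentially bounded on annuli and the sharp radial decay rate is different, so "the analogous radial decay estimate" is not a drop-in replacement: the exponent bookkeeping, and with it the role of the hypothesis $(1-sp)c\le(N-1)ps$, changes in that regime. (3) In the inner region with $c<0$ you correctly observe that H\"older against $L^{p^{*}}$ sits exactly at the endpoint $c\,(p^{*}/q)'=-N$ and that a scale-localized radial estimate on dyadic annuli is required, but you do not establish it — you state explicitly that this is "the key ingredient I would need to complete the argument". (As a minor point, the subcase "$c\ge 0$ and $q\le p^{*}$" is vacuous except for $c=0$, since $q-p^{*}=\tfrac{pc}{N-sp}$.) Since the sub-$1/p$ regime and the endpoint dyadic analysis near the origin are precisely the content of \cite[Theorem 6.2]{EP}, the proposal in its present form essentially reduces the lemma to the result it is meant to prove; the honest options are either to cite \cite{EP} as the paper does, or to supply the refined annulus-localized radial estimate for the full range $0<s<N/p$.
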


Here $\|u\|_{H^{s, p}}$ denotes the norm of $u$ in the Bessel potential space $H^{s, p}(\mathbb{R}^N)$  (see, for instance, \cite{AMS} for a precise definition). We next show that we may replace $\|u\|_{H^{s, p}}$ in the previous lemma by the norm $\|u\|_{s, p}$.

\begin{lemma}\label{embedding with W}
Let $s\in (0,1]$ such that $1<sp<N$. Let $\alpha \in \mathbb{R}$ be such that $-sp<\alpha$,  and  $p<q<\frac{p(N+\alpha)}{N-sp}$. Then, there are a constant  $C>0$ and a parameter $c=c(q, s)$ with $-sp<c<\alpha$, such that
\begin{equation}\label{objective}
\left(\int_{\mathbb{R}^N}|x|^c |u|^q\,dx\right)^{1/q} \leq C\|u\|_{{s, p}},
\end{equation}for all  $u\in W_{rad}^{s, p}(\mathbb{R}^N)$.
\end{lemma}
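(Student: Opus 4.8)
The plan is to deduce \eqref{objective} from Lemma \ref{imbedding H}, but applied not with the parameters $(s,\alpha)$ — for which $q$ would have to be the \emph{critical} exponent — rather with a slightly smaller smoothness exponent $t<s$ and a slightly smaller weight exponent $c<\alpha$, and then to cross from the Bessel potential norm $\|\cdot\|_{H^{t,p}}$ back to $\|\cdot\|_{s,p}$ via the continuous embedding $W^{s,p}(\mathbb{R}^N)\hookrightarrow H^{t,p}(\mathbb{R}^N)$, valid (with loss of derivatives) for $0<t<s\le 1$.

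I would first introduce the ``critical weight at level $s$'', $c_0:=\frac{(q-p)N}{p}-qs$, which is characterized by the identity $q=\frac{p(N+c_0)}{N-sp}$. A short computation records three elementary facts: (i) $c_0>-sp$ is equivalent to $sp<N$; (ii) $c_0<\alpha$ is equivalent to $q<\frac{p(N+\alpha)}{N-sp}$; (iii) $c_0<\frac{(q-p)(N-1)}{p}$ is equivalent to $q(1-sp)<p$, which is automatic when $sp\ge 1$ and, when $sp<1$, follows from the hypothesis $(1-sp)\alpha\le(N-1)sp$ together with $q<\frac{p(N+\alpha)}{N-sp}$ (the latter then forces $q<\frac{p}{1-sp}$). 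By (ii)–(iii) the interval $I:=\big(c_0,\ \min\{\alpha,\tfrac{(q-p)(N-1)}{p}\}\big)$ is nonempty, so I pick any $c\in I$; by (i) this $c$ satisfies $-sp<c_0<c<\alpha$ and also $c<\tfrac{(q-p)(N-1)}{p}<\tfrac{(q-p)N}{p}$. Next I set $t:=\frac{N}{p}-\frac{N+c}{q}$, equivalently $q=\frac{p(N+c)}{N-tp}$; the bounds $c_0<c<\tfrac{(q-p)N}{p}$ translate precisely into $0<t<s\ (<N/p)$.

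It remains to check the hypotheses of Lemma \ref{imbedding H} for the pair $(t,c)$. The condition $c>-N$ is immediate from $c>-sp>-N$. For the condition $(1-tp)c\le(N-1)tp$, substituting $tp=N-\frac{p(N+c)}{q}$ and clearing denominators (using $N+c>0$) one finds it is equivalent to $p(N+c-1)\le q(N-1)$, i.e. $c\le\frac{(q-p)(N-1)}{p}$, which holds since $c\in I$. Hence Lemma \ref{imbedding H} gives a constant $C$ with $\big(\int_{\mathbb{R}^N}|x|^c|u|^q\,dx\big)^{1/q}\le C\|u\|_{H^{t,p}}$ for all radial $u\in H^{t,p}(\mathbb{R}^N)$. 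Finally, a radial $u\in W^{s,p}_{rad}(\mathbb{R}^N)$ lies in $H^{t,p}(\mathbb{R}^N)$ with $\|u\|_{H^{t,p}}\le C\|u\|_{s,p}$ — for $s\in(0,1)$ via the chain $W^{s,p}=B^s_{p,p}\hookrightarrow B^t_{p,1}\hookrightarrow F^t_{p,2}=H^{t,p}$, and for $s=1$ via $W^{1,p}=H^{1,p}\hookrightarrow H^{t,p}$ — and is of course still radial, so chaining the two estimates yields \eqref{objective}.

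I expect the only real obstacle to be fact (iii): one must push the weight strictly below $\alpha$ (to buy room for losing derivatives) while \emph{simultaneously} keeping $c\le\frac{(q-p)(N-1)}{p}$ so that Lemma \ref{imbedding H} applies, and it is exactly here — in the regime $sp<1$ — that the structural hypothesis $(1-sp)\alpha\le(N-1)sp$ is genuinely used to guarantee the admissible window $I$ is nonempty. The only other non-elementary ingredient is the embedding $W^{s,p}\hookrightarrow H^{t,p}$ with $t<s$, which should be stated with a precise reference; everything else is bookkeeping with the scaling relation $q=\frac{p(N+c)}{N-tp}$.
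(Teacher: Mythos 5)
Your proposal is correct and follows essentially the same route as the paper: lower the smoothness index from $s$ to some $t<s$, pick a weight $c<\alpha$ satisfying the scaling relation $q=\frac{p(N+c)}{N-tp}$, apply Lemma~\ref{imbedding H} at level $(t,c)$, and cross back via the continuous embedding $W^{s,p}\hookrightarrow H^{t,p}$ (for which the paper cites Theorem~11.1 of [AMS] where you give the Besov--Triebel--Lizorkin chain). The only organizational difference is that you fix $c$ in the explicit window $I=\bigl(c_0,\min\{\alpha,\tfrac{(q-p)(N-1)}{p}\}\bigr)$ and then read off $t$, whereas the paper picks $s'<s$ close to $s$ first and then solves for $c$; your reduction of the admissibility constraint $(1-tp)c\le(N-1)tp$ to the linear inequality $c\le\frac{(q-p)(N-1)}{p}$ (after clearing denominators with $N+c>0$) is a cleaner bookkeeping of the same check that the paper performs via the auxiliary quantity $\delta=sp-s'p$.
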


\begin{proof}
Let $s\in (0,1]$ such that $1<sp<N$, and consider $\alpha$ such that  $-sp<\alpha$.  Let $p<q<\frac{p(N+\alpha)}{N-sp}$ and let $u\in W^{s, p}_{rad}(\mathbb{R}^N)$. We choose $0<s'<s$ close enough to $s$ so that it satisfies the following  conditions. First, $s'p>1$ and $s'$ satisfies the relation
$$q<\dfrac{p(N+\alpha)}{N-s'p},$$
which is true since  the function
$$g(r, t):=\frac{p(N+r)}{N-tp}$$is increasing in $r$ and $t$, and $s'$ is close to $s$. 

Now, there is $c=c(q, s)<\alpha$ such that
\begin{equation}\label{cond c 1}
q=\dfrac{p(N+c)}{N-s'p}.
\end{equation} In order to apply Lemma \ref{imbedding H}, we need to check that 
\begin{equation}\label{cond c}
(1-s'p)c\leq (N-1)s'p.
\end{equation}Now, recalling that $1<s'p<N$ and $q>p$, we get
\begin{equation}
\begin{split}
(1-s'p)c  &= (s'p-1)\left(N- \dfrac{q(N-s'p)}{p}\right)\\ & \leq (s'p-1)\left( N -(N-s'p)\right)\\& \leq (N-1)s'p,
\end{split}
\end{equation}which proves \eqref{cond c}. Then, by Lemma \ref{imbedding H}, it follows that
\begin{equation}\label{imb}
\left(\int_{\mathbb{R}^N}|x|^c |v|^q\,dx\right)^{1/q} \leq C\|v\|_{H^{s', p}},
\end{equation}for all radial $v\in H^{s', p}(\mathbb{R}^N)$.  Recall that $H^{s',p}(\mathbb{R}^N)$ denotes the Bessel potential space. Since $s'<s$, by Theorem 11.1 in \cite{AMS}, the following inclusion holds:
$$W^{s, p}(\mathbb{R}^N) \subset H^{s', p}(\mathbb{R}^N) \quad \text{with continuity}.$$
Thus, from \eqref{imb}, we conclude \eqref{objective}. This ends the proof.
\end{proof}

Next, we prove a continuity embedding between the spaces $W^{s,p}_{rad, \beta}(\mathbb{R}^N)$ and $L^q_\alpha(\mathbb{R}^N)$, for any  $p<q<\frac{p(N+\alpha)}{N-sp}$. 

\begin{lemma}\label{continuity}
Let $s \in(0,1]$ such that $1/p<s<N/p$,  $-sp<\alpha$ and $p<q<\frac{p(N+\alpha)}{N-sp}$. Moreover, assume that
\begin{equation}\label{cond exponents}
\alpha-\beta +(q-p)\left(\dfrac{1-N}{p} \right)<0.
\end{equation}
Then, for any $u\in W^{s,p}_{rad, \beta}(\mathbb{R}^N)$, there holds
\begin{equation}\label{ineq with eta}
\int_{\mathbb{R}^N}|x|^\alpha |u|^q\,dx \leq C\|u\|_{s, p}^\eta\|u\|_{p, \beta}^\omega ,
\end{equation}for some $\eta, \omega>0$ depending on $N, p, q$ and $s$.
\end{lemma}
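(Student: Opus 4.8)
## Proof proposal for Lemma 3.6 (the continuity embedding $W^{s,p}_{rad,\beta}\hookrightarrow L^q_\alpha$)

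\textbf{Strategy.} The plan is to interpolate between two pieces of information about a radial function $u$: its pointwise decay coming from Strauss' Lemma (which is strong far from the origin), and the embedding of Lemma \ref{embedding with W} into a weighted Lebesgue space $L^q_c$ with a weight exponent $c$ strictly smaller than $\alpha$ (which controls the mass near the origin). The weighted quantity $\int |x|^\alpha|u|^q$ is then split as $|x|^c |u|^{q_0}\cdot |x|^{\alpha-c}|u|^{q-q_0}$ for a suitable $q_0$, where the first factor is handled by Lemma \ref{embedding with W} and the second by the Strauss bound $|u(x)|\le C|x|^{(1-N)/p}\|u\|_{s,p,\beta}$ (using Remark \ref{cont with beta} to get the $\beta$-norm on the right, and absorbing that into $\|u\|_{p,\beta}$) together with the finite weighted $L^p_\beta$-mass. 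Condition \eqref{cond exponents} is exactly what makes this split integrable and produces positive exponents $\eta,\omega$.

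\textbf{Key steps.} First I would invoke Lemma \ref{embedding with W}: since $1/p<s<N/p$, $-sp<\alpha$ and $p<q<\frac{p(N+\alpha)}{N-sp}$, there exist $C>0$ and $c$ with $-sp<c<\alpha$ such that
\begin{equation*}
\int_{\mathbb{R}^N}|x|^c|u|^q\,dx \le C\|u\|_{s,p}^q \le C\|u\|_{s,p,\beta}^q
\end{equation*}
for all radial $u$, where the last step uses Remark \ref{cont with beta}. (One should first check that the extra hypothesis $(1-sp)\alpha\le(N-1)sp$ needed in Lemma \ref{embedding with W} is either implied here or can be bypassed by lowering $\alpha$ to some $\alpha'\in(c,\alpha]$ small enough — this is a minor point but worth a remark.) Second, from Strauss' Lemma and Remark \ref{cont with beta}, $|u(x)|\le C|x|^{(1-N)/p}\|u\|_{s,p,\beta}$ for every $x$. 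Third, write, for a parameter $\theta\in(0,1)$ to be chosen,
\begin{equation*}
|x|^\alpha|u|^q = \Big(|x|^c|u|^q\Big)^{\theta}\cdot\Big(|x|^{\frac{\alpha-\theta c}{1-\theta}}|u|^{q}\Big)^{1-\theta},
\end{equation*}
apply Hölder with exponents $1/\theta$ and $1/(1-\theta)$, and on the second factor estimate $|u|^q = |u|^{q-p}\,|u|^p$ using the pointwise bound on the $|u|^{q-p}$ part. This leaves
\begin{equation*}
\int_{\mathbb{R}^N}|x|^{\frac{\alpha-\theta c}{1-\theta}+(q-p)\frac{1-N}{p}}|u|^{p}\,dx,
\end{equation*}
which is bounded by $\|u\|_{p,\beta}^p$ as soon as the exponent of $|x|$ equals $\beta$ — this fixes $\theta$. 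Collecting powers gives \eqref{ineq with eta} with $\eta = \theta q + (1-\theta)(q-p)$ and $\omega = (1-\theta)p$.

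\textbf{The main obstacle.} The delicate point is verifying that the parameter $\theta\in(0,1)$ determined by matching the $|x|$-exponent to $\beta$ is genuinely in $(0,1)$, and that the resulting $\eta,\omega$ are strictly positive — this is precisely where the hypothesis \eqref{cond exponents}, namely $\alpha-\beta+(q-p)\frac{1-N}{p}<0$, enters, and one has to track that it forces $\theta>0$ while $c<\alpha$ forces $\theta<1$. A secondary technical nuisance is that the naive single split by a constant $\theta$ may not simultaneously make the weight exponent equal $\beta$ \emph{and} keep the $L^p_\beta$ integral over a region where the Strauss decay is usable; one may instead prefer to split the domain into $\{|x|<1\}$ and $\{|x|\ge1\}$, using the $L^q_c$ embedding (with $c<\alpha$, so $|x|^\alpha\le|x|^c$ on the unit ball) on the inner region and the Strauss decay plus $L^p_\beta$-control on the outer region, where $\beta>0$ makes $|x|^\beta\ge|x|^{\text{anything smaller}}$. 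I expect the domain-splitting version to be the cleanest route and would carry it out that way, reserving the interpolation identity above as the mechanism on each piece.
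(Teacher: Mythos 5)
Your first route (the H\"older interpolation with parameter $\theta$) is correct and does prove the lemma. Its validity hinges on checking $\theta\in(0,1)$ from the matching condition $\frac{\alpha-\theta c}{1-\theta}+(q-p)\frac{1-N}{p}=\beta$. Writing $\beta'=\beta+(q-p)\frac{N-1}{p}$, that condition gives $\theta=\frac{\alpha-\beta'}{c-\beta'}$. Hypothesis \eqref{cond exponents} is exactly $\alpha<\beta'$, and from Lemma \ref{embedding with W} you also have $c<\alpha<\beta'$; hence both numerator and denominator are negative with $|c-\beta'|>|\alpha-\beta'|$, so $0<\theta<1$, and consequently $\eta=q-p(1-\theta)>q-p>0$ and $\omega=p(1-\theta)>0$. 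Moreover, these are literally the same exponents the paper obtains: if $e_1=\alpha-c$ and $e_2=\beta'-\alpha$, then $\theta=e_2/(e_1+e_2)$, and your $\eta,\omega$ reduce to $\frac{qe_2+(q-p)e_1}{e_1+e_2}$ and $\frac{pe_1}{e_1+e_2}$. The paper reaches them by a different but dual route: it splits at a radius $\varepsilon$, gets $\varepsilon^{e_1}\|u\|_{s,p}^q + \varepsilon^{-e_2}\|u\|_{p,\beta}^p\|u\|_{s,p}^{q-p}$, and then optimizes over $\varepsilon$; your H\"older interpolation produces the optimized product directly. So your first route is essentially the Legendre-dual phrasing of the paper's argument.

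Your stated worry about the interpolation route, however, is unfounded: Strauss' Lemma \ref{Strauss lemma} is a pointwise estimate $|u(x)|\le C|x|^{(1-N)/p}\|u\|_{s,p}$ valid on \emph{all} of $\mathbb{R}^N$ (not only far from the origin), so there is no issue with ``keeping the $L^p_\beta$ integral over a region where the Strauss decay is usable.'' And your preferred fallback---the fixed domain split at $|x|=1$---would actually fail to deliver the statement as written: it yields a \emph{sum} $C\|u\|_{s,p}^q + C\|u\|_{p,\beta}^p\|u\|_{s,p}^{q-p}$, whereas \eqref{ineq with eta} asserts a single \emph{product} $C\|u\|_{s,p}^\eta\|u\|_{p,\beta}^\omega$, which is a strictly stronger (scale-aware) estimate; there is no elementary passage from the sum to the product. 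To rescue a domain-splitting proof you must split at a free radius $\varepsilon$ and minimize over it (this is exactly what the paper does), which undoes your reason for preferring the split. Stick with the interpolation.

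Two small additional remarks. First, the secondary hypothesis $(1-sp)\alpha\le(N-1)sp$ in Lemma \ref{embedding with W}, which you flag as possibly needing a workaround, is in fact automatic here: since $1/p<s<N/p$ one has $sp>1$, so $1-sp<0$, and the condition reads $\alpha\ge\frac{(N-1)sp}{1-sp}$; the right side is $\le -sp$ precisely when $sp\le N$, which holds, so it is implied by $\alpha>-sp$. Second, in applying Strauss you should be consistent about whether the bound carries $\|u\|_{s,p}$ (as in Lemma \ref{Strauss lemma}) or $\|u\|_{s,p,\beta}$ (via Remark \ref{cont with beta}); using $\|u\|_{s,p}$ directly gives the cleaner form \eqref{ineq with eta}, whereas carrying $\|u\|_{s,p,\beta}$ would leave a mixed norm that you would then have to split again.
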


\begin{proof}
Let $\varepsilon >0$. Since $q<\frac{p(N+\alpha)}{N-sp}$, by Lemma \ref{embedding with W}, there is $c<\alpha$ such that
\begin{equation}\label{objective2}
\left(\int_{\mathbb{R}^N}|x|^c |u|^q\,dx\right)^{1/q} \leq C\|u\|_{{s, p}}.
\end{equation}Now,
\begin{equation}\label{ineq 1 eps}
\int_{|x|\leq \varepsilon}|x|^\alpha |u|^q\,dx = \int_{|x|\leq \varepsilon}|x|^{\alpha-c}|x|^c|u|^q\,dx \leq C\varepsilon^{\alpha-c}\|u\|^q_{s, p}.
\end{equation}Next, by Strauss' Lemma \ref{Strauss lemma} and Remark \ref{cont with beta}, we get that
\begin{equation}
\begin{split}
\int_{|x|>\varepsilon}|x|^\alpha |u|^q\,dx &= \int_{|x|>\varepsilon}|x|^{\alpha-\beta}|x|^\beta |u|^{q-p}|u|^p\,dx \\& \leq C\int_{|x|>\varepsilon}|x|^\beta |u|^{p} |x|^{\alpha-\beta}\left(|x|^{\frac{1-N}{p}}\|u\|_{s, p}\right)^{q-p}\,dx \\& = C \|u\|_{s, p}^{q-p} \int_{|x|>\varepsilon}|x|^\beta |u|^{p}  |x|^{\alpha-\beta +(q-p)\frac{1-N}{p}}\,dx .
\end{split}
\end{equation}
Condition \eqref{cond exponents} yields that
\begin{equation}\label{ineq 2 eps}
\int_{|x|>\varepsilon}|x|^\alpha |u|^q\,dx \leq C\varepsilon^{\alpha-\beta +(q-p)\frac{1-N}{p}}\|u\|^p_{p, \beta} \|u\|_{s, p}^{q-p}.
\end{equation}Therefore, combining \eqref{ineq 1 eps} and \eqref{ineq 2 eps},
\begin{equation}
\int_{\mathbb{R}^N}|x|^\alpha |u|^q\,dx \leq C\left(\varepsilon^{\alpha-c}\|u\|^p_{s, p} +  \varepsilon^{\alpha-\beta +(q-p)\frac{1-N}{p}}\|u\|^p_{p, \beta} \|u\|_{s, p}^{q-p} \right).
\end{equation}Minimizing the auxiliary function 
$$g(\varepsilon):= C_1\varepsilon^{e_1}+C_2\varepsilon^{-e_2},$$with
$$C_1=\|u\|^p_{s, p}, \, C_2=\|u\|^p_{p, \beta} \|u\|_{s, p}^{q-p}$$and
\begin{equation}\label{exponents e}
e_1=\alpha-c, \quad -e_2=\alpha-\beta +(q-p)\frac{1-N}{p},
\end{equation}
we finally get \eqref{ineq with eta} with
$$\eta=\dfrac{pe_2+ (q-p)e_1}{e_1+e_2} \quad \text{and}\quad \omega = \dfrac{pe_1}{e_1+e_2}.$$
Observe that $\eta$ is positive since by \eqref{cond exponents},
$$
\eta= \frac{p(\beta-\alpha)+(q-p)(N-1)+(q-p)(\alpha -c)}{e_1+e_2}>\dfrac{(q-p)(\alpha -c)}{e_1+e_2}>0.
$$ 
This concludes the proof.
\end{proof}

We are now ready to prove the compactness of the embedding $W^{s,p}_{rad, \beta}(\mathbb{R}^N)$ into $L^q_\alpha(\mathbb{R}^N)$.

\begin{theorem} \label{compact}
Let $s\in (0,1]$ such that $1/p<s<N/p$,  $\alpha>-sp $ and $p<q<\frac{p(N+\alpha)}{N-sp}$. Moreover, assume that
\begin{equation}\label{cond exponents 2}
\alpha-\beta +(q-p)\left(\dfrac{1-N}{p} \right)<0. 
\end{equation}
Then, the following embedding is compact
$$W^{s,p}_{rad, \beta}(\mathbb{R}^N) \subset L^q_\alpha(\mathbb{R}^N).
$$

\end{theorem}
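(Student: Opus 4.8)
The plan is to use the classical three-region splitting adapted to the present weights: a ball around the origin, an exterior region, and a bounded annulus in between. Let $(u_n)_n$ be a bounded sequence in $W^{s,p}_{rad,\beta}(\mathbb{R}^N)$. Since $W^{s,p}_{rad,\beta}(\mathbb{R}^N)$ is a closed — hence, being convex, weakly closed — subspace of the reflexive space $W^{s,p}_\beta(\mathbb{R}^N)$, after extracting a subsequence we may assume $u_n\rightharpoonup u$ weakly in $W^{s,p}_{rad,\beta}(\mathbb{R}^N)$; in particular $u$ is radial and the sequences $(\|u_n-u\|_{s,p})_n$ and $(\|u_n-u\|_{p,\beta})_n$ are bounded (for the first one one uses Remark~\ref{cont with beta}). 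It then suffices to prove that, given $\delta>0$, one has $\int_{\mathbb{R}^N}|x|^\alpha|u_n-u|^q\,dx\le\delta$ for all $n$ large.

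For the tails I would simply apply the two one-sided estimates obtained inside the proof of Lemma~\ref{continuity}, namely \eqref{ineq 1 eps} and \eqref{ineq 2 eps}, to the radial function $v=u_n-u$. Since $\alpha-c>0$ and, by \eqref{cond exponents 2}, the exponent $\alpha-\beta+(q-p)\frac{1-N}{p}$ is negative, these give, for all $0<\varepsilon<R$,
\[
\int_{|x|\le\varepsilon}|x|^\alpha|u_n-u|^q\,dx\le C\,\varepsilon^{\alpha-c},\qquad
\int_{|x|\ge R}|x|^\alpha|u_n-u|^q\,dx\le C\,R^{\,\alpha-\beta+(q-p)\frac{1-N}{p}},
\]
with $C$ depending only on the uniform bounds above, hence independent of $n$. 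Thus I can fix $\varepsilon$ small and $R$ large, once and for all, so that each tail is at most $\delta/3$ for every $n$.

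It remains to show $u_n\to u$ in $L^q$ on the annulus $A=\{\varepsilon\le|x|\le R\}$, on which both $|x|^\alpha$ and $|x|^\beta$ are comparable to positive constants; thus it is equivalent to work with the unweighted $L^q(A)$, and the restriction map $W^{s,p}_\beta(\mathbb{R}^N)\to W^{s,p}(A)$ is continuous (the Gagliardo seminorm over $A\times A$, or $\|\nabla u\|_{L^p(A)}$ when $s=1$, is dominated by $[u]_{s,p}$, and $\|u\|_{L^p(A)}$ is controlled by $\|u\|_{p,\beta}$ since $|x|^\beta$ is bounded below on $A$). Consequently $u_n\rightharpoonup u$ in $W^{s,p}(A)$ and stays bounded there, and since $A$ is a bounded smooth domain the (fractional) Rellich-Kondrachov theorem provides a compact embedding of $W^{s,p}(A)$ into $L^r(A)$ for every fixed $r<\frac{Np}{N-sp}$, whence $u_n\to u$ strongly in $L^r(A)$. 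If $q<\frac{Np}{N-sp}$ one simply takes $r=q$ and is finished on $A$. In general I would choose $r\in(p,\min\{q,\frac{Np}{N-sp}\})$ and $\tilde q\in(q,\frac{p(N+\alpha)}{N-sp})$, apply Lemma~\ref{embedding with W} with the exponent $\tilde q$ to bound $(u_n-u)$ in $L^{\tilde q}(A)$ — this is where the radial character is essential, providing integrability beyond the Sobolev threshold — and then interpolate $L^q(A)$ between $L^r(A)$ and $L^{\tilde q}(A)$ to get $\|u_n-u\|_{L^q(A)}\le\|u_n-u\|_{L^r(A)}^{\theta}\|u_n-u\|_{L^{\tilde q}(A)}^{1-\theta}\to0$ for the corresponding $\theta\in(0,1)$. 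Adding the three regional estimates then gives $\int_{\mathbb{R}^N}|x|^\alpha|u_n-u|^q\,dx\le\delta$ for $n$ large, which is the claimed compactness.

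I expect the delicate point to be exactly this annulus step: when $\alpha>0$ the exponent $q$ may exceed the Sobolev exponent $\frac{Np}{N-sp}$, so plain Rellich-Kondrachov does not reach $L^q(A)$ and one has to exploit the improved integrability of radial functions from Lemma~\ref{embedding with W} together with interpolation. A more routine but necessary check is that $W^{s,p}_{rad,\beta}(\mathbb{R}^N)$ is weakly closed, which guarantees that the weak limit $u$ is radial and hence that the tail estimates for radial functions are applicable to $u_n-u$.
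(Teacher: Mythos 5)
Your proof is correct, and the overall structure (three‑region splitting, uniform tail estimates from the inequalities \eqref{ineq 1 eps} and \eqref{ineq 2 eps} applied to $u_n-u$, strong convergence on an intermediate annulus, then sum up) is exactly the structure the paper uses. Where you genuinely diverge is on the annulus step. You correctly flag that when $\alpha>0$ the admissible $q$ can exceed the unweighted Sobolev exponent $\frac{Np}{N-sp}$, so plain Rellich--Kondrachov on $A$ does not reach $L^q(A)$, and you repair this by interpolating $L^q(A)$ between a compactly embedded $L^r(A)$ and a merely bounded $L^{\tilde q}(A)$, the latter furnished by the radial Bessel-potential embedding of Lemma~\ref{embedding with W}. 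The paper instead writes $|u_n|^q=|u_n|^{q-p}|u_n|^p$ on the annulus, controls $\|u_n\|_{L^\infty(\Omega_\varepsilon)}^{q-p}$ uniformly by Strauss' Lemma~\ref{Strauss lemma} (using that $|x|^{(1-N)/p}$ is bounded on $\Omega_\varepsilon$), and then invokes the compact embedding $W^{s,p}\hookrightarrow L^p$ on the bounded annulus to get $\int_{\Omega_\varepsilon}|u_n|^p\to 0$. Both approaches exploit the radial structure, but Strauss' $L^\infty$ bound makes the paper's annulus step shorter and avoids the case split $q\lessgtr \frac{Np}{N-sp}$ and the interpolation; your route has the mild advantage of not requiring a pointwise decay estimate on the annulus, only the integral embedding. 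One small remark: the paper reduces to $u_n\rightharpoonup 0$ from the start (equivalently, works with $v_n:=u_n-u$), which makes the radiality of the weak limit a non-issue, whereas you prove weak closedness of the radial subspace explicitly — a correct but slightly longer bookkeeping step.
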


\begin{proof}
Suppose that $\{u_n\}_{n\in\mathbb{N}}\subset W^{s,p}_{rad, \beta}(\mathbb{R}^N)$ and $u_n \rightharpoonup 0$. Let $M>0$ such that $\|u_n\|_{s, p, \beta}\leq M$. Given $\varepsilon \in (0,1]$, we write
\begin{equation}
\|u_n\|^q_{q, \alpha}= \int_{|x|< \varepsilon}|u_n|^q|x|^{\alpha}\,dx + \int_{ \varepsilon \leq |x|< 1/\varepsilon}|u_n|^q|x|^{\alpha}\,dx + \int_{|x|\geq 1/\varepsilon}|u_n|^q|x|^{\alpha}\,dx := (i)+(ii)+(iii).
\end{equation}
Regarding $(i)$, we first observe that since $q<\frac{p(N+\alpha)}{N-sp}$, then by Lemma \ref{embedding with W} that there is $c \in (-sp,\alpha)$ such that
$$\int_{\mathbb{R}^N}|x|^c|u_n|^q\,dx \leq C\|u_n\|^q_{s, p}\leq C\|u_n\|_{s, p, \beta}^q \leq CM^q,$$where we have used  Remark \ref{cont with beta}. Hence,

\begin{equation}\label{I}
(i)= \int_{|x|<\varepsilon}|x|^{\alpha-c}|x|^c|u_n|^q\,dx \leq C\varepsilon^{\alpha-c}M^q.
\end{equation}

To deal with (ii), we let $\Omega_\varepsilon:=\left\lbrace x\in \mathbb{R}^N: \varepsilon \leq |x|<\varepsilon^{-1}\right\rbrace$. Observe that
$$\int_{\Omega_\varepsilon}|u_n|^p\,dx = \int_{\Omega_\varepsilon}|x|^{-\beta}|x|^\beta|u_n|^p\,dx \leq \varepsilon^{-\beta} \int_{\Omega_\varepsilon}|x|^\beta|u_n|^p\,dx \leq M^p\varepsilon^{-\beta}.$$
By Remark \ref{cont with beta} and the compact embedding $W^{s, p}\subset L^{p}$ in bounded domains (see, for instance \cite[Theorem 7.1]{NPV}), it follows, up to a subsequence that we do not relabel, that
$$u_n\to 0 \quad \text{in }L^p(\Omega_\varepsilon).$$Thus, by Lemma \ref{Strauss lemma}
\begin{equation}\label{II}
(ii) \leq C(\varepsilon)\|u_n\|_{L^{\infty}(\Omega_\varepsilon)}^{q-p}\int_{\Omega_\varepsilon}|u_n|^p\,dx\leq C(\varepsilon,M)\int_{\Omega_\varepsilon}|u_n|^p\,dx \to 0,
\end{equation}
as $n\to \infty$.
Finally, we apply the Strauss' Lemma \ref{Strauss lemma} to get
\begin{equation}
(iii)\leq C \int_{|x|\geq 1/\varepsilon}|x|^\beta|x|^{\alpha-\beta}|u_n|^p\left(|x|^{(1-N)/p}\|u_n\|_{s, p, \beta} \right)^{q-p}\,dx\leq CM^{q-p} \int_{|x|\geq 1/\varepsilon}|x|^\delta|x|^\beta|u_n|^p\,dx,
\end{equation} 
where $\delta:=\alpha-\beta-(p-q)\frac{N-1}{p}<0$ by assumption. Hence
\begin{equation}\label{III}
(iii)\leq CM^{q}\varepsilon^{-\delta}.
\end{equation}

Finally, the conclusion follows combining \eqref{I}, \eqref{II} and \eqref{III}. 
\end{proof}

\section{Existence of solutions}\label{sec.3}

\begin{proof}[Proof of Theorem \ref{teo.1}]
We check the conditions of the Brezis-Nirenberg lemma given in Lemma \ref{lema.BN}. We recall that $E=\mathcal{W}^{s,p}_\beta(\mathbb{R}^N)$ given in \eqref{space}.

\noindent \textbf{(i)}\,  Case $\gamma=0$ and $s\in (0,1)$. By \eqref{space}, the underlying space is $W^{s, p}_{rad, \beta}(\mathbb{R}^N)$. Let $u$ such that $\|u\|_{s,p,\beta}\ll 1$. Then, by Theorem \ref{compact}, 
\begin{align} \label{dd1}
\begin{split}
\mathcal{J}(u)&=\frac{1}{p} [u]_{s,p}^p + \frac{1}{p}\|u\|_{p,\beta}^p  - \frac{1}{q} \|u\|_{q,\alpha}^q\\
&\geq \frac{1}{p} \left( [u]_{s,p}^p + \|u\|_{p,\beta}^p \right)  -C\|u\|_{s,p,\beta}^q\\
&\geq \frac{1}{p2 ^p} \left( [u]_{s,p} + \|u\|_{p,\beta} \right)^p- C \|u\|_{s,p,\beta}^q=
 \frac{1}{p2 ^p} \|u\|_{s,p,\beta}^p- C \|u\|_{s,p,\beta}^q
 \end{split}
\end{align}
where we have used that $a^p + b^p \geq \max\{a^p, b^p\} \geq (\frac{a+b}{2})^p$. 

When $\gamma=1$, by \eqref{space}, the underlying space is now $W^{1, p}_{rad, \beta}(\mathbb{R}^N)$. Analogously, for any $u$ such that $\|u\|_{1,p,\beta}\ll 1$, by Theorem \ref{compact}
\begin{align} \label{dd2}
\begin{split}
\mathcal{J}(u)&=\frac{1}{p}\|\nabla u\|_p^p + \frac{1}{p}\|u\|_{p,\beta}^p- \frac{1}{q}\|u\|_{q,\alpha}^q \geq  \frac{1}{p2 ^p} \|u\|_{1,p,\beta}^p- C \|u\|_{1,p,\beta}^q,
\end{split}
\end{align}
When $\gamma \in (0,1)$ and $s\in (0,1)$, for $u\in W^{1, p}_{rad, \beta}(\mathbb{R}^N)$ such that $\|u\|_{s,p,\beta}\ll 1$,  Theorem \ref{compact}   yields 
\begin{align} \label{dd3}
\begin{split}
\mathcal{J}(u)&=\frac{\gamma}{p}\|\nabla u\|_p^p + \frac{1-\gamma}{p}  [u]_{s,p}^p + \frac{1}{p}\|u\|_{p,\beta}^p- \frac{1}{q}\|u\|_{q,\alpha}^q\\
&\geq \frac{\gamma}{p} \left(  \|\nabla u\|_p^p  + \|u\|_{p,\beta}^p \right)  -C\|u\|_{1,p,\beta}^q\\
&\geq \frac{\gamma}{p2 ^p} \left( \|\nabla u\|_p + \|u\|_{p,\beta} \right)^p- C \|u\|_{1,p,\beta}^q=
 \frac{\gamma}{p2 ^p} \|u\|_{1,p,\beta}^p- C \|u\|_{1,p,\beta}^q,
 \end{split}
\end{align}
Since $p<q$, from \eqref{dd1}, \eqref{dd2} and \eqref{dd3} we have that $\mathcal{J}(u)>0$ for $\|u\|_{s,p,\beta} =r$ small enough.

\medskip

\noindent \textbf{(ii)}\,  Let $\gamma \in [0,1]$ and $s\in (0,1]$ (where we identify $\gamma=1$ with $s=1$). Let $w>0$ fixed, and define $v:=tw$, $t\in\mathbb{R}$. Then
\begin{align*}
\mathcal{J}(v) = 
\frac{\gamma t^p}{p}\|\nabla w\|_p^p+ \frac{(1-\gamma)t^p}{p} [w]_{s,p}^p + \frac{t^p}{p}\|w\|_{p,\beta}^p  - \frac{t^q}{q} \|w\|_{q,\alpha}^q \leq C_1 t^p - C_2 t^q.
\end{align*}
Since $p<q$, there exists $t$ such that $\mathcal{J}(v)<0$ and $\|v\|>r$.

\medskip
 
\noindent \textbf{(iii)}\,  Let $\gamma\in (0,1)$ and $s\in (0,1)$. Let  $\{u_n\}_{n\in\mathbb{N}}\subset W^{1,p}_{rad, \beta}(\mathbb{R}^N)$  be such that $\mathcal{J}'(u_n)\to 0$ and $\mathcal{J}(u_n)\to c$. Since $\mathcal{J}'(u_n)\to 0$, using \eqref{cota.lnl} we get that
\begin{align*}
\left|\langle \mathcal{J}'(u_n),u_n \rangle\right| &= \left|\gamma \|\nabla u_n\|_p^p + (1-\gamma)[u_n]_{s,p}^p +\|u_n\|_{p,\beta}^p - \|u_n\|_{q,\alpha}^q \right| \leq \|u_n\|_{1,p,\beta}
\end{align*}
for $n$ large enough. Since $|\mathcal{J}(u_n)|\leq C$, we have that
$$
\left|\frac{\gamma}{p} \|\nabla u_n\|_p^p + \frac{1-\gamma}{p}[u_n]_{s,p}^p +\frac{1}{p}\|u_n\|_{p,\beta}^p -\frac{1}{q}\|u_n\|_{q,\alpha}^q \right|\leq C.
$$
The last two expressions give that
\begin{align*}
\gamma \|\nabla u_n\|_p^p + (1-\gamma)[u_n]_{s,p}^p &+  \|u_n\|_{p,\beta}^p \leq pC   +\frac{p}{q}\|u_n\|_{q,\alpha}^q\\
&\leq
pC + \frac{p}{q}\gamma \|\nabla u_n\|_p^p +\frac{p}{q} (1-\gamma)[u_n]_{s,p}^p +\frac{p}{q}\|u_n\|_{p,\beta}^p + \frac{p}{q}\|u_n\|_{1,p,\beta}
\end{align*}
from where, since $p<q$, 

\begin{equation} \label{eq.b}
\begin{split}
\left(1-\frac{p}{q}\right) \|u_n\|_{1, p, \beta}^p& \leq C(p, \gamma)\left(1-\frac{p}{q}\right)\left( \gamma \|\nabla u_n\|_p^p + (1-\gamma)[u_n]_{s,p}^p +  \|u_n\|_{p,\beta}^p \right) \\& \leq C(p, \gamma)\left(pC+ \frac{p}{q}\|u_n\|_{1,p,\beta}\right).
\end{split}
\end{equation}

It follows that $\{u_n\}_{n\in\mathbb{N}}$ is bounded in $W^{1,p}_{rad, \beta}(\mathbb{R}^N)$.


 Then, up to a subsequence, $u_n\rightharpoonup u$ weakly in $W^{1,p}_{rad, \beta}(\mathbb{R}^N)$. From the compact embedding given in Theorem \ref{compact}, we get that
$u_n \to u$ strongly in $L^q_\alpha(\mathbb{R}^N)$, which implies

$$
\int_{\mathbb{R}^N} |x|^\alpha |u_n-u|^q\,dx \to 0.
$$
Therefore
\begin{align*}
\Big|\langle \mathcal{L}_{s,p,\gamma}\, u_n,u_n-u \rangle +\int_{\mathbb{R}^N} |x|^\beta |u_n|^{p-2}&u_n (u_n-u)\,dx  \Big| = \left|\langle \mathcal{J}'(u_n), u_n-u \rangle + \int_{\mathbb{R}^N} |x|^\alpha |u_n|^{q-2}u_n(u_n-u)\,dx\right|\\
&\leq
\|\mathcal{J}'(u_n)\|_{W^{1,p}_\beta(\mathbb{R}^N)'} \|u_n-u\|_{1, p, \beta} +
 \int_{\mathbb{R}^N} |x|^\alpha |u_n|^{q-1}|u_n-u|\,dx,
 \end{align*}
where $W^{1,p}_\beta(\mathbb{R}^N)'$ denotes the dual space of $W^{1,p}_\beta(\mathbb{R}^N)$. The first term goes to 0 since $\mathcal{J}'(u_n)\to 0$ and the sequence $\{u_n\}_{n\in \mathbb{N}}$ is bounded in $W^{1,p}_{rad, \beta}(\mathbb{R}^N)$. For the second term, observe that by H\"older's inequality and \eqref{ineq with eta}, it vanishes as $n\to\infty$. Indeed, 
\begin{align*}
\int_{\mathbb{R}^N} |x|^\alpha |u_n|^{q-1}|u_n-u|\,dx &=
\int_{\mathbb{R}^N} |x|^{\frac{\alpha(q-1)}{q}}  |u_n|^{q-1} |x|^\frac{\alpha}{q}|u_n-u|\,dx \\
&\leq 
\left( \int_{\mathbb{R}^N} |x|^{\alpha}  |u_n|^{q}\,dx \right)^\frac{q-1}{q}\left(\int_{\mathbb{R}^N} |x|^\alpha|u_n-u|^q\,dx \right)^\frac{1}{q}\\
&\leq 
C\|u_n\|_{1, p}^{\eta_1}\left(\int_{\mathbb{R}^N}|x|^\beta |u_n|^p\,dx\right)^{\omega_1} \left(\int_{\mathbb{R}^N} |x|^\alpha|u_n-u|^q\,dx \right)^\frac{1}{q}\\
&\leq 
C_1\|u_n\|_{1, p,\beta}^{\eta_2} \left(\int_{\mathbb{R}^N} |x|^\alpha|u_n-u|^q\,dx \right)^\frac{1}{q}\\
&\leq 
C_2 \left(\int_{\mathbb{R}^N} |x|^\alpha|u_n-u|^q\,dx \right)^\frac{1}{q} \to 0\quad  \text{ as } n\to\infty,
\end{align*}
where $C$, $C_1$, $C_2$, $\eta_1$, $\eta_2$ and $\omega_1$ are positive constants depending only of $N,s,p,q$.

Therefore,   the operator $\mathcal{\tilde L}$ defined  as
$$
\langle  \mathcal{\tilde L} u,v\rangle := \langle \mathcal{L}_{s,p,\gamma}\, u,v\rangle + \int_{\mathbb{R}^N} |x|^\beta |u|^{p-2}uv\,dx 
$$
fulfills that 
$$
\langle  \mathcal{\tilde L} u_n,u_n-u \rangle \to 0 \text{ as } n\to \infty.
$$
Hence, by Lemma \ref{rem.S.prop} below, we get that $u_n\to u$ strongly in $W^{1,p}_\beta(\mathbb{R}^N)$.

When $\gamma=0$ and $s\in (0,1)$, and when $\gamma=1$ (identifying this case with $s=1$), the reasoning is analogous.

\medskip

\noindent \textbf{(iv)}\,   Let $\gamma \in (0,1)$ and $s\in (0,1)$. To obtain a positive solution, we choose the mapping $P$ defined as $P(u)=|u|$. Trivially, $P(0)=0$ and $P(v)=v$, where $v$ is given (ii). Moreover, it is direct that $\| \nabla|u|\|_p = \|\nabla u\|_p$, $\||u|\|_p= \|u\|_p$, $\| |u|\|_{p,\beta}=\|u\|_{p,\beta}$ and $\||u|\|_{q,\alpha} = \|u\|_{q,\alpha}$. Furthermore, since $||u(x)|-|u(y)|| \leq |u(x)-u(y)|$, and $t\mapsto t^p$ is monotone increasing for $t\geq 0$, $||u(x)|-|u(y)||^p \leq |u(x)-u(y)|^p$. This gives that $[|u|]_{s,p} \leq [u]_{s,p}$. All these relations give that for any $u\in W^{1,p}_\beta(\mathbb{R}^N)$,
$$
\mathcal{J}(P(u)) \leq \mathcal{J}(u).
$$
An analogous (and simpler) analysis can be made in the cases $\gamma=0$ and $s\in (0,1)$, and $\gamma=1$ (identified with $s=1$).

Therefore, the conclusion follow from  Lemma \ref{lema.BN}. This concludes the proof. 
\end{proof}
 
\begin{lemma}\label{rem.S.prop}Let $u_n \in \mathcal{W}^{s, p}_\beta(\mathbb{R}^N)$ be  a sequence such that $u_n\rightharpoonup u$ and 
$$
\langle  \mathcal{\tilde L} u_n,u_n-u \rangle \to 0 \text{ as } n\to \infty.
$$Then $u_n\to u$ strongly in $\mathcal{W}^{s, p}_\beta(\mathbb{R}^N)$.

\end{lemma}

\begin{proof}Observe that the weak convergence implies that
\begin{equation}\label{weak S}
\langle  \mathcal{\tilde L} u_n-\mathcal{\tilde L}u,u_n-u \rangle \to 0 \text{ as } n\to \infty.
\end{equation}
We will appeal to the following well-known inequalities for $\xi, \eta \in \mathbb{R}^N$:
\begin{equation}\label{ineq degenerate}
(\Phi_p(\xi) - \Phi_p(\eta))(\xi-\eta)\geq C|\xi-\eta|^{p}, \quad p \geq 2,
\end{equation}and
\begin{equation}\label{ineq singular}
(\Phi_p(\xi) - \Phi_p(\eta))(\xi-\eta)\geq (p-1)\dfrac{|\xi -\eta|^2}{(|\xi|+|\eta|)^{2-p}}, \quad 1< p < 2,
\end{equation}where $\Phi_p(\xi)=|\xi|^{p-2}\xi$. First, assume $\gamma=0$ and hence $\mathcal{W}^{s, p}_\beta(\mathbb{R}^N)=W^{s, p}_{rad, \beta}(\mathbb{R}^N).$ Now, if $p\geq 2$, we have
\begin{equation*}
\begin{split}
& \langle  \mathcal{\tilde L} u_n-\mathcal{\tilde L}u,u_n-u \rangle \\& = \int_{\mathbb{R}^N}\int_{\mathbb{R}^N}\dfrac{\left(\Phi_p(u_n(x)-u_n(y))-\Phi_p(u(x)-u(y))\right)(u_n(x)-u_n(y)-(u(x)-u(y)))}{|x-y|^{N+sp}}\,dx\,dy \\& + \int_{\mathbb{R}^N}|x|^\beta\left(\Phi_p(u_n(x))-\Phi_p(u(x))\right)(u_n(x)-u(x))\,dx \\& \geq C\left([u_n-u]^p_{s, p}+\|u_n-u\|_{p, \beta}^p\right)\geq C\|u_n-u \|_{s, p, \beta}^p.
\end{split}
\end{equation*}Hence, \eqref{weak S} gives the strong convergence in $W^{s, p}_\beta(\mathbb{R}^N)$. If now $\gamma\in (0, 1]$, then  $\mathcal{W}^{s, p}_{\beta}(\mathbb{R}^N)=W^{1, p}_{rad, \beta}(\mathbb{R}^N).$ If $p\geq 2$, then by \eqref{ineq degenerate}
\begin{equation}\label{similar i}
\begin{split}
& \langle  \mathcal{\tilde L} u_n-\mathcal{\tilde L}u,u_n-u \rangle \\& = \gamma \int_{\mathbb{R}^N}\left( \Phi_p(\nabla u_n(x))-\Phi_p(\nabla u(x))\right)(\nabla u_n(x)-\nabla u(x))\,dx \\& +(1-\gamma)\int_{\mathbb{R}^N}\int_{\mathbb{R}^N}\dfrac{\left(\Phi_p(u_n(x)-u_n(y))-\Phi_p(u(x)-u(y))\right)(u_n(x)-u_n(y)-(u(x)-u(y)))}{|x-y|^{N+sp}}\,dx\,dy \\& + \int_{\mathbb{R}^N}|x|^\beta\left(\Phi_p(u_n(x))-\Phi_p(u(x))\right)(u_n(x)-u(x))\,dx \\& \geq C\left(\gamma[u_n-u]_{1, p}^p +\|u_n-u\|_{p, \beta}^p\right)\geq C\|u_n-u \|_{1, p, \beta}^p,
\end{split}
\end{equation}and the conclusion holds.

For the singular case, we proceed as follows. Observe that by H\"{o}lder inequality and \eqref{ineq singular},
\begin{equation*}
\begin{split}
\int_{\mathbb{R}^N}|\nabla u_n(x)-\nabla u(x)|^p\,dx& = \int_{\mathbb{R}^N}\dfrac{|\nabla u_n(x)-\nabla u(x)|^p}{(|\nabla u_n(x)|+|\nabla u(x)|)^{\frac{(2-p)p}{2}}}(|\nabla u_n(x)|+|\nabla u(x)|)^{\frac{(2-p)p}{2}}\,dx\\ & \leq \left(\int_{\mathbb{R}^N}\dfrac{|\nabla u_n(x)-\nabla u(x)|^2}{(|\nabla u_n(x)|+|\nabla u(x)|)^{2-p}}\,dx\right)^{p/2}\left(\int_{\mathbb{R}^N}(|\nabla u_n(x)|+|\nabla u(x)|)^{p}\,dx\right)^{1/p}\\&\leq C\int_{\mathbb{R}^N}\left( \Phi_p(\nabla u_n(x))-\Phi_p(\nabla u(x))\right)(\nabla u_n(x)-\nabla u(x))\,dx,
\end{split}
\end{equation*}where we have used that $u_n$ is bounded. A similar inequality holds for $[u_n-u]_{s, p}$ when $s\in (0, 1)$. Thus, considering the cases $\gamma=0$ and $\gamma\in (0, 1]$ as before, a similar argument to \eqref{similar i} gives the conclusion in the case $1<p<2$.

\end{proof}

\section{Boundedness of solutions}\label{B}
In this section, we prove that any radial solution 
u of problem \eqref{eq}  is bounded.

\begin{proof}[Proof of Theorem \ref{teo_boundedness}] In what follows, we will apply a De Giorgi's iteration scheme to control the level sets of a solution $u$ to problem \eqref{eq}.

 Let $u\in \mathcal{W}^{s, p}_\beta(\mathbb{R}^N)$ be a solution of problem \eqref{eq}. For a positive integer $k$, define
$$w_{k}:= (u-(1-2^{-k}))_{+}.$$Then, as in \cite{FP}, the following holds
\begin{equation}\label{property w}
w_{k+1} \leq w_k \text{ in }\mathbb{R}^N, \quad u(x)< (2^{k+1}-1)w_k \text{ in }\left\lbrace w_{k+1}>0\right\rbrace, \quad \text{and }\left\lbrace w_{k+1}>0\right\rbrace \subset \left\lbrace w_{k}>2^{-(k+1)}\right\rbrace. 
\end{equation} Also, $w_k(x)\to (u(x)-1)_{+}$ a.e. in $\mathbb{R}^N$, so by Fatou Lemma
\begin{equation}\label{convergence to u}
\int_{\mathbb{R}^N} |x|^\alpha |(u-1)_{+}|^q\,dx \leq \liminf_{k\to \infty}\int_{\mathbb{R}^N} |x|^{\alpha}|w_k|^q\,dx.
\end{equation}
Now, letting
$$C(\gamma)=\begin{cases}1, \quad \text{if } \gamma=0, 1,\\
\max\left\lbrace \dfrac{1}{\gamma}, \dfrac{1}{1-\gamma} \right\rbrace,  \quad \text{if } \gamma \in (0, 1), \end{cases}$$we get

\begin{equation}\label{estimate w k}
\begin{split}
\|w_{k+1}\|_{s, p, \beta}^p &\leq C( \gamma)\bigg(\gamma\int_{\mathbb{R}^N}|\nabla w_{k+1}|^{p-2}\nabla w_{k+1}\cdot \nabla w_{k+1}\,dx \\ &+(1-\gamma)\int_{\mathbb{R}^N}\int_{\mathbb{R}^N}\dfrac{|w_{k+1}(x)-w_{k+1}(y)|^{p-2}(w_{k+1}(x)-w_{k+1}(y))(w_{k+1}(x)-w_{k+1}(y))}{|x-y|^{N+sp}}\,dx\,dy \bigg)\\& + \int_{\mathbb{R}^N}|x|^{\beta}|w_{k+1}|^{p-1}w_{k+1}\,dx \\ & \leq C(\gamma)\bigg(\gamma\int_{\mathbb{R}^N}|\nabla u|^{p-2}\nabla u\cdot \nabla w_{k+1}\,dx\\& +(1-\gamma)\int_{\mathbb{R}^N}\int_{\mathbb{R}^N}\dfrac{|u(x)-u(y)|^{p-2}(u(x)-u(y))(w_{k+1}(x)-w_{k+1}(y))}{|x-y|^{N+sp}}\,dx\,dy \bigg)\\& + \int_{\mathbb{R}^N}|x|^{\beta}|u|^{p-1}w_{k+1}\,dx \\ & = C(\gamma)\int_{\mathbb{R}^N}|x|^{\alpha}|u|^{q-1}w_{k+1}\,dx,
\end{split}
\end{equation}where we have used the fact that
$$|v_+(x)-v_+(y)|^p \leq (v(x)-v(y))(v_+(x)-v_+(y))|v_+(x)-v_+(y)|^{p-2}$$and also that $u$ is a solution of \eqref{eq}. Now, by \eqref{property w}, we get 
\begin{equation}\label{estimate w k 2}
\int_{\mathbb{R}^N}|x|^{\alpha}|u|^{q-1}w_{k+1}\,dx \leq \int_{\mathbb{R}^N}|x|^{\alpha}|(2^{k+1}-1)w_k|^{q-1}(2^{k+1}-1)w_k\,dx\leq 2^{q(k+1)}\int_{\mathbb{R}^N}|x|^\alpha |w_{k}|^q\,dx.
\end{equation}By Lemma \ref{continuity} applied to $w_{k+1}\in \mathcal{W}^{s,p}_\beta(\mathbb{R}^N)$ (recall also the notation \eqref{exponents e}) and by \eqref{estimate w k} and \eqref{estimate w k 2},
\begin{equation}
\begin{split}
\int_{\mathbb{R}^N}|x|^\alpha |w_{k+1}|^q\,dx &\leq C\|w_{k+1}\|_{s, p, \beta}^{\frac{pe_2+qe_1}{e_1+e_2}}\\& \leq\left(2^{q(k+1)}C(\gamma)\int_{\mathbb{R}^N}|x|^\alpha |w_{k}|^q\,dx \right)^{\frac{pe_2+qe_1}{pe_1+pe_2}}\\& \leq M^k\left(\int_{\mathbb{R}^N}|x|^\alpha |w_{k}|^q\,dx \right)^{1+\delta}
\end{split}
\end{equation}for some $M>1$ large enough and for $\delta:= (q-p)e_1/(pe_1+pe_2)>0$ since $q>p$. Hence, letting 
$$a_k:=\int_{\mathbb{R}^N}|x|^\alpha |w_{k}|^q\,dx,$$we have obtained
$$a_{k+1}\leq M^ka_k^{1+\delta}, \quad \delta>0.$$Thus, choosing
$$a_0\leq M^{-\frac{1}{•\delta^2}}$$as in \cite{FP}, gives $a_k\to 0$ as $k\to \infty$.  Therefore, we obtain that there is $\varepsilon\in (0, 1)$ such that if
 \begin{equation}\label{assumption with eps}
 \int_{\mathbb{R}^N}|x|^\alpha |w_{0}|^q\,dx <\varepsilon
 \end{equation}then
 $$\lim_{k\to \infty}\int_{\mathbb{R}^N}|x|^\alpha |w_{k}|^q\,dx =0.$$Hence, \eqref{convergence to u} implies  $\|u\|_{L^{\infty}(\mathbb{R}^N)}\leq 1$. Observe that 
 $$\int_{\mathbb{R}^N}|x|^\alpha |w_{0}|^q\,dx=\int_{\mathbb{R}^N} |x|^\alpha u_{+}^q\,dx$$so the result is valid under the assumption
 $$\int_{\mathbb{R}^N} u_{+}^q|x|^\alpha\,dx<\varepsilon.$$If now
 $\xi=\int_{\mathbb{R}^N} u_{+}^q|x|^\alpha\,dx>0$ is arbitrary, then we choose  $C>1$ large enough so that
 \begin{equation}
\int_{\mathbb{R}^N} \left(\frac{u_{+}}{C}\right)^q|x|^\alpha\,dx<\varepsilon.
 \end{equation}Moreover, $u/C$ also satisfies \eqref{estimate w k} with an extra constant at the end. Hence, from the above argument 
 $$\|u/C\|_{L^{\infty}(\mathbb{R}^N)}\leq 1 \text{ or }\|u\|_{L^{\infty}(\mathbb{R}^N)}\leq C.$$This ends the proof of the theorem.
\end{proof}

\section{Non-existence of solutions} \label{sec.6}

We begin this section with the following computation, which justifies our use of the non-existence result \cite[Proposition 1.4]{ROS}, originally stated for bounded domains. Given $\lambda>1$, we can write
$$
u(\lambda x)-u(x)=\int_1^\lambda \frac{d}{dt}u(tx)\,dt= \int_1^\lambda \nabla u(tx)\cdot x\,dt.
$$
Therefore, 
$$
\left| \frac{u(\lambda x)-u(x)}{\lambda -1} \right|^r  \leq  \frac{1}{(\lambda-1)^r} \left(\int_1^\lambda |\nabla u(tx)| |x|\,dt \right)^r.
$$
Using Jensen's inequality yields
$$
\left| \frac{u(\lambda x)-u(x)}{\lambda -1} \right|^r  \leq  \frac{1}{\lambda-1} \int_1^\lambda |\nabla u(tx)|^r |x|^r\,dt.
$$
Then, integrating on $\mathbb{R}^N$, using Fubini's Theorem and changing variables we get
\begin{align} \label{eq.lam}
\begin{split}
\int_{\mathbb{R}^N}\left| \frac{u(\lambda x)-u(x)}{\lambda -1} \right|^r\,dx  &\leq  \frac{1}{\lambda-1} \int_1^\lambda \frac{1}{t^{N+r}} \int_{\mathbb{R}^N }|\nabla u(y)|^r |y|^r\,dy dt\\
&=\frac{1}{N+r-1}\frac{1-\lambda^{-(N+r-1)}}{\lambda-1} \int_{\mathbb{R}^N }|\nabla u(y)|^r |y|^r\,dy \\
&\leq
C_{N,r} \int_{\mathbb{R}^N } | \nabla u(y)|^r |y|^r\,dy,
\end{split}
\end{align}
since $\lambda >1$ and $N+r-1>0$. Therefore, if $|\nabla u(x)||x|\in L^r(\mathbb{R}^N)$, for some $r>1$, \eqref{eq.lam} enable us to apply  \cite[Lemma 4.2]{ROS}, and hence \cite[Proposition 1.4]{ROS} holds with $\Omega=\mathbb{R}^N$.

\begin{proof}[Proof of Theorem \ref{teo.no.exist}]
Let $\gamma\in (0,1]$  and $\beta\in\mathbb{R}$. Given  $u\in W^{1,p}_{rad, \beta}(\mathbb{R}^n)$, denote $u_\lambda(x)=u(\lambda x)$ for $\lambda>1$.  An easy computation gives that, when $\beta>-p$, 
$$
\|u_\lambda\|_{1,p,\beta} = \|u_\lambda\|_{p,\beta} + [u_\lambda]_{1,p} \leq \lambda^{-\frac{ N+\beta}{p}} \|u\|_{p,\beta} + \lambda^{-\frac{N-p}{p}}[u]_{1,p}  \leq \lambda^{-\tau}\|u\|_{1,p,\beta},
$$
where $\tau :=\frac{N-p}{p}$.

Let $f(x,t)=|x|^\alpha |t|^{q-2}t$ and $F(x,u)=\int_0^u f(x,t)\,dt=\frac{1}{q} |x|^\alpha |u|^q$, being $u\in W^{1,p}_{rad, \beta}(\mathbb{R}^N)$  a weak solution of \eqref{eq}. It is straightforward to see that
\begin{equation} \label{eq.1.3}
\tau t f(x,t) > N F(x,t) + x \cdot F_x(x,t) \quad \text{ for all } t\in \mathbb{R}^N, t\neq 0 
\end{equation}
whenever $q> \frac{N+\alpha}{\tau}$, that is, when $q>\frac{p(N+\alpha)}{N-p}$. Moreover, recall that $u\in L^\infty(\mathbb{R}^N)$ by definition of the space $ \mathcal{\tilde X}(\mathbb{R}^N)$.  So,  by \cite[Proposition 1.4]{ROS} we have that $u\equiv  0$.

When $\gamma=0$ and $s\in (0,1)$, an analogous computation gives that any $u\in W^{s,p}_\beta(\mathbb{R}^N)$ fulfills that $\|u_\lambda\|_{s,p,\beta}\leq \lambda^{-\tau}\|u\|_{s,p,\beta}$ with $\tau = \frac{N-sp}{p}$ when  $\beta>-sp$. Therefore, \eqref{eq.1.3} holds for $q>\frac{p(N+\alpha)}{N-sp}$, giving again that $u\equiv 0$.
\end{proof}

\section*{Acknowledgment}
We would like to thank the anonymous referees who thoroughly read this article. Their precise and detailed suggestions have significantly contributed to its improvement and clarity.

\section*{Appendix}
The following variant of the Mountain-Pass Theorem will be of use for our arguments. See \cite[Theorem 10]{BN}. 
\begin{lemma}[Brezis--Nirenberg] \label{lema.BN}
Let $E$ be a Banach space and $\Phi \in C^{1}(E,\mathbb{R})$ satisfy $\Phi(0)=0$ and the following conditions:
\begin{itemize}
    \item[(i)] there exists $r>0$ such that $\displaystyle \inf_{\|u\|=r}\Phi(u)>0$;
    \item[(ii)] there exists $v$ with $\|v\|>r$ such that $\Phi(v)<0$;
    \item[(iii)] the Palais--Smale condition: if $\Phi'(u_n)\to 0$ and $\Phi(u_n)\to c$,  
    then $(u_n)$ has a subsequence converging to $u\in E$;
    \item[(iv)] there exists a continuous mapping $P:E\to E$ such that  
    $P(0)=0$, $P(v)=v$, and $\Phi(P(u))\le \Phi(u)$ for every $u\in E$.
\end{itemize}
Then there exists a critical point $u^{*}\in \overline{P(E)}$ such that
\[
\Phi(u^{*}) \ge \inf_{\|u\|=r} \Phi(u).
\]
\end{lemma}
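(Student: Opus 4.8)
The plan is to obtain $u^{*}$ as a mountain–pass critical point for the minimax value over paths joining $0$ and $v$, using the map $P$ of (iv) only to confine a near–optimal family of such paths inside $P(E)$. Set
\[
\Gamma:=\{\gamma\in C([0,1],E):\gamma(0)=0,\ \gamma(1)=v\},\qquad c:=\inf_{\gamma\in\Gamma}\ \max_{t\in[0,1]}\Phi(\gamma(t)).
\]
First I would collect the elementary properties of $c$. The class $\Gamma$ is nonempty (the segment $t\mapsto tv$ belongs to it) and, $\Phi$ being continuous on the compact set $\{tv:t\in[0,1]\}$, one has $c<\infty$. For the lower bound, observe that for any $\gamma\in\Gamma$ the scalar function $t\mapsto\|\gamma(t)\|$ is continuous, equals $0<r$ at $t=0$ and $\|v\|>r$ at $t=1$, hence equals $r$ at some $t_{0}$; by (i), $\max_{t}\Phi(\gamma(t))\ge\Phi(\gamma(t_{0}))\ge\alpha:=\inf_{\|u\|=r}\Phi(u)>0$, so $c\ge\alpha>0$. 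Moreover $\Phi(0)=0<c$ and, by (ii), $\Phi(v)<0<c$; these strict inequalities will let us keep the endpoints fixed under the deformations below. Finally I would exploit $P$: since $P$ is continuous with $P(0)=0$, $P(v)=v$, the composition $P\circ\gamma$ lies in $\Gamma$ whenever $\gamma$ does, and the inequality $\Phi(P(\gamma(t)))\le\Phi(\gamma(t))$ of (iv) gives $\max_{t}\Phi(P\circ\gamma)\le\max_{t}\Phi(\gamma)$; consequently, for every $\varepsilon>0$ there is $\gamma\in\Gamma$ with $\gamma([0,1])\subset P(E)$ and $\max_{t}\Phi(\gamma(t))\le c+\varepsilon$, obtained by replacing any $\varepsilon$–almost optimal path by its image under $P$.

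I would then conclude by the standard deformation argument. Assume, for contradiction, that $\Phi$ has no critical point in $\overline{P(E)}$ at a level $\ge\alpha$; in particular the set $K_{c}:=\{u\in E:\Phi(u)=c,\ \Phi'(u)=0\}$ is disjoint from $\overline{P(E)}$. By the Palais–Smale condition (iii), $K_{c}$ is compact, and since $\overline{P(E)}$ is closed and nonempty there is $d>0$ such that the open neighbourhood $\mathcal{O}:=\{u:\operatorname{dist}(u,K_{c})<d\}$ of $K_{c}$ satisfies $\mathcal{O}\cap\overline{P(E)}=\emptyset$ (if $K_{c}=\emptyset$ no set need be removed and the argument is only simpler). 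Because $\Phi\in C^{1}$ satisfies $(PS)_{c}$, the quantitative deformation lemma furnishes $\varepsilon\in(0,c/2)$ and a continuous map $\eta\colon E\to E$ with $\eta=\mathrm{id}$ on $\{\Phi\le c-2\varepsilon\}$ and $\eta\big(\{\Phi\le c+\varepsilon\}\setminus\mathcal{O}\big)\subset\{\Phi\le c-\varepsilon\}$. Picking $\gamma\in\Gamma$ with $\gamma([0,1])\subset P(E)\subset E\setminus\mathcal{O}$ and $\max_{t}\Phi(\gamma(t))\le c+\varepsilon$ as above, the path $\eta\circ\gamma$ still belongs to $\Gamma$ — its endpoints $0$ and $v$ are fixed since $\Phi(0),\Phi(v)\le 0<c-2\varepsilon$ — and satisfies $\max_{t}\Phi(\eta(\gamma(t)))\le c-\varepsilon<c$, contradicting the definition of $c$. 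Hence $K_{c}\cap\overline{P(E)}\neq\emptyset$, i.e.\ there is a critical point $u^{*}\in\overline{P(E)}$ with $\Phi(u^{*})=c\ge\inf_{\|u\|=r}\Phi(u)$, which is the assertion.

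The hard part will be the deformation step: one must build a flow for $\Phi$ that pushes $\{\Phi\le c+\varepsilon\}$ into $\{\Phi\le c-\varepsilon\}$ while avoiding the neighbourhood $\mathcal{O}$ of $K_{c}$ and fixing the low sublevel set $\{\Phi\le c-2\varepsilon\}$ (hence the endpoints). This is the classical construction — a locally Lipschitz pseudo–gradient vector field for $\Phi$ exists since $\Phi\in C^{1}$, and $(PS)_{c}$ forces $\|\Phi'\|$ to be bounded away from $0$ on the relevant strip outside $\mathcal{O}$, after which a cutoff–and–ODE argument produces $\eta$ — routine but requiring care. If one prefers to avoid an explicit deformation, an alternative is to apply Ekeland's variational principle on the complete metric space $(\Gamma,d_{\infty})$, with $d_{\infty}(\gamma_{1},\gamma_{2})=\max_{t}\|\gamma_{1}(t)-\gamma_{2}(t)\|$, to the functional $\gamma\mapsto\max_{t}\Phi(\gamma(t))$: this yields a Palais–Smale sequence $u_{n}$ at level $c$ lying within $o(1)$ of the images of paths $P\circ\gamma_{n}$, hence with $\operatorname{dist}(u_{n},P(E))\to0$, and (iii) then gives $u_{n}\to u^{*}$ along a subsequence with $u^{*}\in\overline{P(E)}$ and $\Phi(u^{*})=c$.
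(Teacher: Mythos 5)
Your argument is correct. The paper does not prove this lemma at all --- it is quoted verbatim from \cite[Theorem 10]{BN} --- so there is no in-paper proof to compare against; judged on its own, your proof is the standard one and is sound. The minimax level $c$ over paths from $0$ to $v$ is well defined and $\ge\inf_{\|u\|=r}\Phi>0$ by the intermediate value theorem applied to $t\mapsto\|\gamma(t)\|$; condition (iv) is used exactly where it should be, to replace an almost-optimal path $\gamma$ by $P\circ\gamma\in\Gamma$ with image in $P(E)$ and no larger maximum; and the contradiction via the deformation theorem (Rabinowitz's Theorem A.4, say, which under (PS) produces $\eta$ with $\eta(\{\Phi\le c+\varepsilon\}\setminus\mathcal{O})\subset\{\Phi\le c-\varepsilon\}$ for any neighborhood $\mathcal{O}$ of $K_c$, fixing the sublevels where $\Phi\le 0$) is applied correctly, the key point being that compactness of $K_c$ (from (PS)) and closedness of $\overline{P(E)}$ give the positive separation needed to choose $\mathcal{O}$ disjoint from $\overline{P(E)}$. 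The Ekeland/general-minimax alternative you sketch at the end is equally valid and in fact yields the conclusion slightly more directly, producing a Palais--Smale sequence at level $c$ with $\operatorname{dist}(u_n,P(E))\to 0$.
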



\begin{thebibliography}{00}



\bibitem{AMS} Aronszajn, N.,  Mulla, F.,  Szeptycki, P., \emph{On spaces of potential connected with $L^p$ classes.} Ann. Inst. Fourier 13 (1963), 211–306.


\bibitem{BQ} Barrios, B. , Quaas, A. , \emph{The sharp exponent in the study of the nonlocal H\'enon equation in $\mathbb{R}^N$: a Liouville theorem and an existence result. } Calc. Var. PDE 59 (2020), 114.

\bibitem{BN} Berestycki, H.,  Capuzzo-Dolcetta, I. and Nirenberg L.,  \emph{Variational methods for indefinite superlinear homogeneous elliptic problems}. NoDEA 2  (1995), 553-572. 


\bibitem{BDVV}  Biagi, S.,  Dipierro, S.,  Valdinoci, E.,  Vecchi, E., 
\emph{A Brezis–Nirenberg type result for mixed local and nonlocal operators}. 
NoDEA 32 (2025), 62.

\bibitem{DSFV}  da Silva, J. V.,  Fiscella, A.,  Viloria, V., 
\emph{Mixed local–nonlocal quasilinear problems with critical nonlinearities}. 
J. Differ. Equ. 408 (2024), 494–536.


\bibitem{PdN} De N\'apoli, P., 
\emph{Symmetry breaking for an elliptic equation involving the fractional Laplacian}. 
Differ. Integral Equ. 31 (2018), 75–94.

\bibitem{EP}  De N\'apoli, P. and Drelichman, I., \emph{Elementary proofs of embedding theorems for potential spaces of radial functions.} Methods of Fourier analysis and approximation theory, Appl. Numer. Harmon. Anal. (2016), 115-138. 

\bibitem{NPV}  Di Nezza, E., Palatucci, G.   Valdinoci, E., \emph{
Hitchhiker's guide to the fractional Sobolev spaces.} 
Bull. Sci. Math. 136 (2012), 521–573.
 
\bibitem{FBS} Fern\'andez Bonder, J., Salort, A., 
\emph{Fractional order Orlicz–Sobolev spaces}. 
J. Funct. Anal. 277 (2019), 333–367.



\bibitem{FP}  Franzina, G.,  Palatucci, G.
\emph{Fractional $p$-eigenvalues}. 
Riv. Mat. Univ. Parma 5 (2014), 373–386.

\bibitem{GL}  Garain, P.,  Lindgren, E., 
Higher Hölder regularity for mixed local and nonlocal degenerate elliptic equations. 
Calc. Var. PDE 62 (2023), 67.
 
\bibitem{Henon}  H\'enon, M., 
\emph{Numerical experiments on the stability of spherical stellar systems}. 
Astron. Astrophys. 24 (1973), 229–238.

\bibitem{ZHT}  Huang, S.,   Tian, Q., Zha, X.
Uniform boundedness results of solutions to mixed local and nonlocal elliptic operator. 
AIMS Math. 8 (2023), 20665–20678.


 
\bibitem{Ma}  Ma, L., 
On nonlocal H\'enon type problems with the fractional Laplacian. 
Nonlinear Anal. 203 (2021), 112190.

\bibitem{MMV} Maione, A. , Mugnai, D. ,   Vecchi, E., 
Variational methods for nonpositive mixed local–nonlocal operators. 
Fract. Calc. Appl. Anal. 26 (2023), 943–961.

\bibitem{Ni} W.-M. Ni, 
\emph{A nonlinear Dirichlet problem on the unit ball and its applications}. 
Indiana Univ. Math. J. 31 (1982), 801–807.


 
\bibitem{ROS} Ros-Oton, X. ,  Serra, J., 
\emph{Nonexistence results for nonlocal equations with critical and supercritical nonlinearities}. 
Commun. Partial Differ. Equ. 40 (2015), 115–133.
 
\bibitem{SV} Salort, A.,  Vecchi, E.
\emph{On the mixed local–nonlocal H\'enon equation}. 
Differ. Integral Equ. 35 (2022), 795–818.
 
\bibitem{SS}  Sickel, W.,  Skrzypczak, L., Vybiral, J., 
\emph{On the interplay of regularity and decay in case of radial functions I. Non-homogeneous spaces}. 
Commun. Contemp. Math. 14 (2012).

\bibitem{Sin}  Sintzoff, P., 
\emph{Symmetry of solutions of a semilinear elliptic equation with unbounded coefficients}. 
Differ. Integral Equ. 16 (2003), 769–786.

\bibitem{SW}  Sire, Y., Wei, J., 
\emph{On a fractional H\'enon equation and applications}. 
Math. Res. Lett. 22 (2015), 1791–1804.

\bibitem{SVWZ}  Su, X.,   Valdinoci, E.,  Wei, Y.,  Zhang, J., 
Regularity results for solutions of mixed local and nonlocal elliptic equations. 
Math. Z. 302 (2022), 1855–1878.
 
\end{thebibliography}
\end{document}